\newtheorem{theorem}{Theorem}
\newtheorem{lemma}[theorem]{Lemma}
\newtheorem{proposition}[theorem]{Proposition}
\newcommand*\modn[1]{\overline{#1}}
\def\im{\mathop{\mathrm{Im}}\nolimits} 
\def\dom{\mathop{\mathrm{Dom}}\nolimits}
\def\ker{\mathop{\mathrm{Ker}}\nolimits}
\title{Groups of permutations preserving orientation (parity) of subsets of a fixed size, and related monoids}
\author{V\'\i tor Fernandes\footnote{This work is funded by national funds through the FCT - Funda\c c\~ao para a Ci\^encia e a Tecnologia, I.P., under the scope of the (Center for Mathematics and Applications) projects UIDB/00297/2020 (https://doi.org/10.54499/UIDB/00297/2020) and UIDP/00297/2020 (https://doi.org/10.54499/UIDP/00297/2020).}, Alexei Vernitski}
\begin{document}

\maketitle

\begin{abstract}
We study permutations on $n$ elements preserving orientation (parity) of every subset of size $k$. We describe all groups of these permutations. Unexpectedly, these groups (except for some special cases) are either trivial, cyclic or dihedral. In this context, we define and study monoids generalizing monoids of order-preserving mappings and monoids of orientation-preserving mappings. 
\end{abstract}

\section{Introduction} \label{sec:intro}

Fix a positive integer $n$. Denote $0, 1, \dots, n-1$ by $[n]$. Elements of $[n]$ will be denoted by letters $a, b, c$. Mappings on $[n]$ will be denoted by lower-case Greek letters. 
Denote the identity mapping on $[n]$ by $\iota_n$.
For a given $k$, denote the cyclic shift $a \mapsto (a+1) \; (\mathrm{mod} \; n)$ by $\sigma_n$. Denote the cyclic group generated by $\sigma_n$ by $Z_n$. Denote the permutation $a \mapsto (n-1)-a$ reversing the order of elements in $[n]$ by $\rho_n$. Denote the dihedral group generated by $\sigma_n$ and $\rho_n$ by $D_n$. 



A partial mapping $\alpha$ on $[n]$ is called \textit{order-preserving}  
[respectively, \textit{order-reversing}] 
if $x < y$ implies $\alpha(x)\leq \alpha(y)$ 
[respectively, $\alpha(x)\geq \alpha(y)$]  
for all $x,y \in \dom\alpha$.  
An order-preserving or order-reversing partial mapping is also called \textit{monotone}.  The set of order-preserving mappings is closed with respect to composition and contains $\iota_n$, hence, it is a monoid. This monoid is one of the most frequently studied semigroups in semigroup theory; its research originated in \cite{aizenstat1962defining,howie1971products,gomes1992ranks,higgins1993combinatorial,vernitskii1995proof} and led to hundreds of other publications.

One of the ways in which semigroup theorists have generalized the monoid of order-preserving mappings is the following. 
Let $s=(a_1,a_2,\ldots,a_t)$
be a sequence of $t$ ($t\geq0$) elements of $[n]$. 
We say that $s$ is \textit{cyclic} 
[respectively, \textit{anti-cyclic}] if there
exists no more than one index $i\in\{1,\ldots,t\}$ such that
$a_i>a_{i+1}$ [respectively, $a_i<a_{i+1}$],
where $a_{t+1}$ denotes $a_1$.
Notice that the sequence $s$ is cyclic
[respectively, anti-cyclic] if and only if $s$ is empty or there exists
$i\in\{0,1,\ldots,t-1\}$ such that 
$a_{i+1}\leq a_{i+2}\leq \cdots\leq a_t\leq a_1\leq \cdots\leq a_i $ 
[respectively, $a_{i+1}\geq a_{i+2}\geq \cdots\geq a_t\geq a_1\geq \cdots\geq a_i $] (the index
$i\in\{0,1,\ldots,t-1\}$ is unique unless $s$ is constant and
$t\geq2$). We say that $s$ is \textit{oriented} if $s$ is cyclic or $s$ is anti-cyclic. 
Given a partial mapping $\alpha$ on $[n]$ such that
$\dom\alpha=\{a_1<\cdots<a_t\}$, we 
say that $\alpha$ is \textit{orientation-preserving} 
[respectively, \textit{orientation-reversing}, \textit{oriented}] if the sequence of its images
$(\alpha(a_1),\ldots,\alpha(a_t))$ is cyclic [respectively, anti-cyclic, oriented]. The set of orientation-preserving mappings is a monoid. It was introduced 25 years ago \cite{catarino1999monoid,fernandes2000monoid} and has been repeatedly studied since then, some relevant recent publications being \cite{higgins2022orientation,fernandes2023monoid,fernandes2023CKMS}. Unless the rank of $\alpha$ is $2$, being orientation-preserving can be expressed in a language of triples of elements of $[n]$, namely, $\alpha$ is orientation-preserving if and only if from $a < b < c$ it follows that $\alpha(a) \le \alpha (b) \le \alpha (c)$ or $\alpha (b) \le \alpha (c) \le \alpha(a)$ or $\alpha(c) \le \alpha (a) \le \alpha (b)$, see \cite{higgins2022orientation, fernandes2023CKMS, higgins2024correction, fernandes2024CKMS}.

As one compares the monoid of order-preserving mappings and the monoid of orientation-preserving mappings, one can ask the following questions. 

\begin{itemize}
\item The monoid of order-preserving mappings is defined using a condition on pairs of elements of $[n]$, and the mappings in the monoid of orientation-preserving mappings can be described (with the exception of one special case) using a condition on triples of elements of $[n]$. Is it possible to generalize these conditions to $t$-tuples of elements of $[n]$?
\item The only permutation belonging to the monoid of order-preserving mappings is $\iota_n$. In other words, the set of permutations belonging to the monoid of order-preserving mappings is the one-element group $\{\iota_n\}$. As to the monoid of orientation-preserving mappings, the set of permutations belonging to it is the cyclic group $Z_n$. How will this generalize if we consider conditions on $t$-tuples of elements of $[n]$?
\end{itemize}

The aim of this article is to answer these questions. For one-to-one mappings, we introduce a property which generalizes the properties of being order-preserving and orientation-preserving. We successfully describe all groups of permutations satisfying this generalized property. Then we define and study monoids that can be seen as generalizations of monoids of order-preserving mappings and orientation-preserving mappings.

\section{The generalized definition for groups} \label{sec:definition-groups}

Consider a one-to-one partial mapping $\alpha$ on $[n]$ whose domain and image both have size $t$. By an inversion we mean a pair $a, b$ such that $a < b$ and $\alpha(a) > \alpha(b)$. We will say that $\alpha$ is even (odd) if the number of its inversions is even (odd). It is useful to note that if $t=n$ then this definition of being even (odd) coincides with the usual definition of an even (odd) permutation. 

\begin{lemma} \label{lem:product-is-even}
Consider one-to-one partial mappings $\alpha$ and $\beta$ on $[n]$ such that the domain of $\beta$ coincides with the image of $\alpha$. If both $\alpha$ and $\beta$ are even then the composition $\beta(\alpha(\cdot))$ is even.
\end{lemma}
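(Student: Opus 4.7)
The plan is to reduce the statement to the well-known fact that the sign map on the symmetric group $S_t$ is a group homomorphism. Write
\[
\dom\alpha = \{a_1 < a_2 < \cdots < a_t\}, \quad \im\alpha = \dom\beta = \{b_1 < b_2 < \cdots < b_t\}, \quad \im\beta = \{c_1 < c_2 < \cdots < c_t\}.
\]
I would then transport $\alpha$ and $\beta$ to genuine permutations $\tilde\alpha, \tilde\beta \in S_t$ via rank labellings: define $\tilde\alpha(i)=j$ iff $\alpha(a_i)=b_j$, and $\tilde\beta(j)=k$ iff $\beta(b_j)=c_k$. Both $\tilde\alpha$ and $\tilde\beta$ are bona fide permutations of $\{1,\ldots,t\}$ because $\alpha$ and $\beta$ are one-to-one and the three sets above all have size $t$.

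Next I would record two nearly tautological observations. First, the relabellings $i\mapsto a_i$, $j\mapsto b_j$, $k\mapsto c_k$ are strictly increasing, so they preserve the order relation used to count inversions. Consequently the number of inversions of $\alpha$ equals the number of inversions of $\tilde\alpha$, and similarly for $\beta$ and $\beta\circ\alpha$. Second, the composition is compatible with the relabelling: for each $i$ one has
\[
\beta(\alpha(a_i)) = \beta(b_{\tilde\alpha(i)}) = c_{\tilde\beta(\tilde\alpha(i))},
\]
so $\widetilde{\beta\circ\alpha} = \tilde\beta \circ \tilde\alpha$ in $S_t$.

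Finally I would invoke the classical fact that the sign $\mathrm{sgn}\colon S_t \to \{\pm 1\}$ is a homomorphism, equivalently that the parity of the inversion count is additive under composition in $S_t$. Since $\alpha$ and $\beta$ are even, so are $\tilde\alpha$ and $\tilde\beta$; therefore $\tilde\beta \circ \tilde\alpha$ is even, and the identification from the previous paragraph then gives that $\beta\circ\alpha$ itself is even.

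There is no real obstacle in this argument: the only point that requires any care is to check that the inversion count genuinely descends to the rank labelling, which is immediate from the fact that the relabelling bijections are strictly order-preserving. Once the problem has been moved to $S_t$, the claim is just the homomorphism property of $\mathrm{sgn}$.
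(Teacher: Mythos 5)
Your proof is correct and follows essentially the same route as the paper: both arguments transport $\alpha$ and $\beta$ to honest permutations of $\{1,\ldots,t\}$ via the strictly increasing rank labellings and then invoke multiplicativity of the sign, the only cosmetic difference being that the paper phrases this through permutation matrices and the multiplicativity of the determinant rather than through $\mathrm{sgn}\colon S_t\to\{\pm1\}$ directly. Your explicit check that the inversion count descends to the rank labelling is the same point the paper handles with the remark that the determinant criterion ``obviously generalizes'' to its matrices.
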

\begin{proof}
We will denote the size of the domain and the image by $t$, as in the definition above. Define a matrix $M$ of a one-to-one partial mapping $\mu$ as follows. The size of $M$ is $t \times t$. Suppose the domain of $\mu$ consists of elements $d_1 < \dots < d_t$ and the image of $\mu$ consists of elements $e_1 < \dots < e_t$. The entry in $M$ at the position $i, j$ is $1$ if $\mu(d_i) = e_j$ and $0$ otherwise. 

It is a well-known fact that a permutation is odd (even) if and only if the corresponding permutation matrix has the determinant equal to $-1$ (equal to $1$). This fact obviously generalizes to the matrices defined above; namely, $\mu$ is odd (even) if and only if the determinant of $M$ is equal to $-1$ (equal to $1$). 

Now notice that the matrix of $\beta(\alpha(\cdot))$ is the product of the matrix of $\alpha$ and the matrix of $\beta$. Hence, the result follows.
\end{proof}

We define $\Gamma_n^t$, where $t = 2, \dots, n$, as the set consisting of those permutations on $[n]$ whose restriction to every $t$-element subset of $[n]$ is even. Due to Lemma \ref{lem:product-is-even}, the set $\Gamma_n^t$ is a group. 


The groups $\{\iota_n\}$ and $Z_n$ that arose in the previous section in the context of monoids of order-preserving mappings and orientation-preserving mappings make a reappearance in the context fo this new definition; indeed, $\Gamma_n^2 = \{\iota_n\}$ and $\Gamma_n^3 = Z_n$. Apart from these two groups, it is easy to spot yet another famous group among groups $\Gamma_n^t$; this is $\Gamma_n^n$, which is the alternating group $A_n$. Speaking of the size of these groups, $|\Gamma_n^2| = 1$, $|\Gamma_n^3| = n$ and $|\Gamma_n^n| = \frac{1}{2}\cdot n!$. What is the size $|\Gamma_n^t|$ if the value of $t$ is in the range between $3$ and $n$? Examples above might be seen as suggesting that the size $|\Gamma_n^t|$ increases monotonically as a function of $t$. However, the real picture is different; let us illustrate it by an example of $|\Gamma_{11}^t|$. The table below shows values of $t$ in the top row and values of $|\Gamma_{11}^t|$ in the bottom row.
\begin{table}[H]
    \centering
    \begin{tabular}{|c|c|c|c|c|c|c|c|c|c|}
    \hline
        2 & 3 & 4 & 5 & 6 & 7 & 8 & 9 & 10 & 11 \\
    \hline
        1 & 11 & 2 & 22 & 1 & 11 & 2 & 22 & 43,200 & 19,958,400\\
    \hline
    \end{tabular}
\end{table}

As you can see, the values of $|\Gamma_n^t|$ oscillate between $1$, $n$, $2$ and $2n$ on the whole range $t = 2, \dots, n-2$, and $t = n-1$ and $t = n$ are special cases. Below we prove theorems which explain in detail how these numbers arise.

For completeness, in addition to the groups considered above, one may also consider group $\Gamma_n^1$; since partial mappings with one-element domains cannot have inversions, every permutation belongs to this group, that is, $\Gamma_n^1$ coincides with the symmetric group $S_n$.

\section{Groups $\Gamma_n^t$ for $t = 2, \dots, n-2$}

Let $S$ be a subset of $[n]$. Let $\alpha$ be a permutation on $[n]$. Denote the restriction of $\alpha$ to $S$ by $\alpha \upharpoonright S$. Denote the number of inversions of the restriction of $\alpha$ to $S$ by $\mathrm{inv}(\alpha \upharpoonright S)$. 

In this section we will sometimes use the usual arithmetic, or the arithmetic modulo $n$, or the arithmetic modulo $2$ (or modulo $4$), in a quick succession. When we work in the arithmetic modulo $2$ or $4$, we will always clearly indicate this by using notation $(\mathrm{mod} \; 2)$ or $(\mathrm{mod} \; 4)$. When we work in the arithmetic modulo $n$, we will stress that by using a bar above the potentially ambiguous expression, for example, $\modn{a+1}$. 

In the proof of Lemma \ref{lem:in-D} below, it will be useful to reformulate the fact that a permutation $\alpha$ belongs or does not belong to $D_n$ in the language of inversions. To achieve this aim, we introduce the following definition. Consider $a, b \in [n]$, where $b \neq a$ and $b \neq \modn{a+1}$. We will say that $\alpha(b)$ lies between $\alpha(a)$ and $\alpha(\modn{a+1})$ if either $a < \modn{a+1}$ and $\alpha(b)$ lies in the interval between $\min(\alpha(a), \alpha(\modn{a+1}))$ and $\max(\alpha(a), \alpha(\modn{a+1}))$ or $a > \modn{a+1}$ and $\alpha(b)$ does not lie in the interval between $\min(\alpha(a), \alpha(\modn{a+1}))$ and $\max(\alpha(a), \alpha(\modn{a+1}))$. The above definition is a mouthful, but it is routine to check that it can be expressed in a much simpler way as saying that $\alpha(b)$ lies between $\alpha(a)$ and $\alpha(\modn{a+1})$ if and only if

$$\mathrm{inv}(\alpha \upharpoonright \{a, b\}) \neq \mathrm{inv}(\alpha \upharpoonright \{\modn{a+1}, b\}) \;(\mathrm{mod} \; 2)$$ or, even more conveniently, $$\mathrm{inv}(\alpha \upharpoonright \{a, b\}) = \mathrm{inv}(\alpha \upharpoonright \{\modn{a+1}, b\}) + 1 \;(\mathrm{mod} \; 2).$$  Hence, the following useful observation is true.

\begin{lemma} \label{lem:between}
Let $a \in [n]$. Consider $b_1, \dots, b_r \in [n]$ such that $\alpha(b_i)$ lies between $\alpha(a)$ and $\alpha(\modn{a+1})$. Consider $c_1, \dots, c_s \in [n]$ such that $\alpha(c_j)$ does not lie between $\alpha(a)$ and $\alpha(\modn{a+1})$. Let $S = \{b_1, \dots, b_r; c_1, \dots, c_s\}$. Let $S' = S \cup \{a\}$ and $S'' = S \cup \{\modn{a+1}\}$. Then $\mathrm{inv}(\alpha \upharpoonright S') = \mathrm{inv}(\alpha \upharpoonright S'') + r \;(\mathrm{mod} \; 2)$.
\end{lemma}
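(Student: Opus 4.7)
The plan is to decompose the inversion counts $\mathrm{inv}(\alpha\upharpoonright S')$ and $\mathrm{inv}(\alpha\upharpoonright S'')$ according to whether an inversion involves the distinguished element ($a$ or $\modn{a+1}$) or not, and then apply the two-element characterization of ``lying between'' already established just before the lemma.

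Concretely, I would first write
$$\mathrm{inv}(\alpha\upharpoonright S') = \mathrm{inv}(\alpha\upharpoonright S) + \sum_{x \in S} \mathrm{inv}(\alpha\upharpoonright\{a,x\}),$$
since the inversions of $\alpha\upharpoonright S'$ split into those entirely within $S$ and those pairing $a$ with some $x \in S$ (and each two-element restriction $\{a,x\}$ contributes $0$ or $1$ inversions). Analogously,
$$\mathrm{inv}(\alpha\upharpoonright S'') = \mathrm{inv}(\alpha\upharpoonright S) + \sum_{x \in S} \mathrm{inv}(\alpha\upharpoonright\{\modn{a+1},x\}).$$
Subtracting, the common term $\mathrm{inv}(\alpha\upharpoonright S)$ cancels, leaving
$$\mathrm{inv}(\alpha\upharpoonright S') - \mathrm{inv}(\alpha\upharpoonright S'') = \sum_{x \in S}\bigl(\mathrm{inv}(\alpha\upharpoonright\{a,x\}) - \mathrm{inv}(\alpha\upharpoonright\{\modn{a+1},x\})\bigr).$$

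Now I would reduce mod $2$ and split the sum over $S$ into the contributions of the $b_i$ and the $c_j$. By the characterization of ``lying between'' displayed immediately before the lemma, each term indexed by some $b_i$ is congruent to $1 \pmod 2$, while each term indexed by some $c_j$ is congruent to $0 \pmod 2$. Hence the right-hand side is $\equiv r \pmod 2$, which is exactly the claim. There is no real obstacle here; the whole content of the lemma is the bookkeeping observation that inversions within $S$ cancel, so only the ``new'' inversions created by $a$ or $\modn{a+1}$ matter, and for those the two-element version of the between-ness criterion does all the work.
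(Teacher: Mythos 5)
Your proposal is correct and is essentially the argument the paper intends: the paper states Lemma \ref{lem:between} without a written proof, presenting it as an immediate consequence of the displayed two-element congruence $\mathrm{inv}(\alpha \upharpoonright \{a, b\}) = \mathrm{inv}(\alpha \upharpoonright \{\modn{a+1}, b\}) + 1 \;(\mathrm{mod} \; 2)$, and your decomposition (inversions within $S$ cancel, each $b_i$ contributes $1$ and each $c_j$ contributes $0$ modulo $2$) is exactly the bookkeeping that justifies that step.
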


\begin{lemma} \label{lem:in-D}
$\Gamma_n^t \subseteq D_n$ for every $t = 2, \dots, n-2$.
\end{lemma}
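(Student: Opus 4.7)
I will prove the contrapositive: assuming $\alpha \notin D_n$, I will exhibit a $t$-element subset $T \subseteq [n]$ on which $\mathrm{inv}(\alpha \upharpoonright T)$ is odd, so that $\alpha \notin \Gamma_n^t$.

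For each $a \in [n]$ set
$$B_a = \{b \in [n] \setminus \{a, \modn{a+1}\} : \alpha(b) \text{ lies between } \alpha(a) \text{ and } \alpha(\modn{a+1})\}$$
and $C_a = [n] \setminus (\{a, \modn{a+1}\} \cup B_a)$. The main preliminary step is the reformulation: $\alpha \in D_n$ if and only if $|B_a| \in \{0, n-2\}$ for every $a \in [n]$. Unwinding the definition of ``lies between'' in the cases $a \neq n-1$ and $a = n-1$ separately shows that $|B_a| \in \{0, n-2\}$ is equivalent to $\alpha(a)$ and $\alpha(\modn{a+1})$ being cyclically adjacent in $[n]$ (the two values $0$ and $n-2$ cover linear adjacency and the ``wrap-around'' pair $\{0, n-1\}$). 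Conversely, for $n \geq 3$ a bijection $\alpha$ sending every cyclically adjacent pair of positions to a cyclically adjacent pair of values must have all successive differences $\alpha(\modn{a+1}) - \alpha(a) \pmod n$ of constant sign (a sign change at consecutive positions would force two values of $\alpha$ to coincide), so such an $\alpha$ is a power of $\sigma_n$ or a reflection, hence an element of $D_n$.

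Given $\alpha \notin D_n$, I pick $a$ with $|B_a| \notin \{0, n-2\}$, so $|B_a|, |C_a| \geq 1$ and $|B_a| + |C_a| = n - 2 \geq t$. I then choose a $(t-1)$-subset $S \subseteq [n] \setminus \{a, \modn{a+1}\}$ with $k := |S \cap B_a|$ odd. Such $S$ exists: the feasible values of $k$ form the integer interval $[\max(0, t - 1 - |C_a|), \min(|B_a|, t - 1)]$, and a short case check using $|B_a|, |C_a| \geq 1$ and $t \leq n - 2$ shows this interval has length at least $2$, and therefore contains an odd integer.

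Finally, set $S' = S \cup \{a\}$ and $S'' = S \cup \{\modn{a+1}\}$, both $t$-subsets of $[n]$. Lemma \ref{lem:between} gives
$$\mathrm{inv}(\alpha \upharpoonright S') \equiv \mathrm{inv}(\alpha \upharpoonright S'') + k \pmod 2,$$
and since $k$ is odd, exactly one of $\mathrm{inv}(\alpha \upharpoonright S')$, $\mathrm{inv}(\alpha \upharpoonright S'')$ is odd, contradicting $\alpha \in \Gamma_n^t$. The main difficulty is the preliminary characterization of $D_n$ via the values of $|B_a|$, which requires carefully distinguishing the case $a = n-1$ (where ``lies between'' uses the complement of the linear interval) from the case $a \neq n-1$; once that translation is in hand, the remainder is a counting exercise combined with Lemma \ref{lem:between}.
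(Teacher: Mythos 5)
Your proof is correct and follows essentially the same route as the paper's: characterize $\alpha \notin D_n$ by the existence of a position $a$ whose pair of images has other values lying strictly between them, choose a $(t-1)$-subset containing an odd number of such ``between'' elements, and apply Lemma \ref{lem:between} to produce two $t$-subsets whose restrictions have opposite parity. If anything, your write-up is slightly more explicit than the paper's about why an odd intersection size $k$ is always achievable (the paper asserts the decomposition $t-1=r+s$ with $r$ odd without verifying its existence) and about the converse direction of the adjacency characterization of $D_n$.
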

\begin{proof}
Suppose a permutation $\alpha$ on $[n]$ does not belong to $D_n$. This can be expressed by saying that there is $a$ such that $\alpha(\modn{a+1})$ is neither $\modn{\alpha(a)+1}$ nor $\modn{\alpha(a)-1}$. Hence, there exists at least one $b \in [n]$ such that $\alpha(b)$ lies between $\alpha(a)$ and $\alpha(\modn{a+1})$.
Consider all elements $b_1, \dots, b_p \in [n] \setminus \{a, \modn{a+1} \}$ such that $\alpha(b_i)$ lies between $\alpha(a)$ and $\alpha(\modn{a+1})$. Consider all elements $c_1, \dots, c_q \in [n] \setminus \{a, \modn{a+1} \}$ such that $\alpha(c_j)$ does not lie between $\alpha(a)$ and $\alpha(\modn{a+1})$. Note that $p+q = n-2$ and $p \ge 1$. The number $t-1$ is in the range $1, \dots, n-3$. Represent $t-1$ as a sum of an odd positive integer $r$ in the range $1, \dots, p$ and a non-negative integer $s$ in the range $0, \dots, q$. Like in Lemma \ref{lem:between}, consider $S = \{b_1, \dots, b_r; c_1, \dots, c_s\}$ and define $S' = S \cup \{a\}$ and $S'' = S \cup \{\modn{a+1}\}$. Since $r$ is odd, by Lemma \ref{lem:between}, we have $\mathrm{inv}(\alpha \upharpoonright S') \neq \mathrm{inv}(\alpha \upharpoonright S'') \;(\mathrm{mod} \; 2)$. That is, if the restriction of $\alpha$ on $S'$ is even then the restriction of $\alpha$ on $S''$ is odd, and if the restriction of $\alpha$ on $S''$ is even then the restriction of $\alpha$ on $S'$ is odd.
Recall that for $\alpha$ to be in $\Gamma_n^t$, the restriction of $\alpha$ to every $t$-element subset of $[n]$ should be even. Hence, $\alpha \notin \Gamma_n^t$.
\end{proof}

\begin{lemma} \label{lem:sigma-in-Gamma}
For every $t = 2, \dots, n-2$, we have $\sigma_n \in \Gamma_n^t$ if and only if $t$ is odd.
\end{lemma}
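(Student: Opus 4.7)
The plan is to compute directly how many inversions $\sigma_n \upharpoonright S$ has for an arbitrary $t$-element subset $S \subseteq [n]$, and then observe that parity depends only on whether $n-1 \in S$ and, in that case, on the parity of $t$.

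Write $S = \{a_1 < a_2 < \cdots < a_t\}$. The key observation is that $\sigma_n$ acts as $a \mapsto a+1$ on $\{0,1,\dots,n-2\}$, which is order-preserving, and sends the single ``wrap-around'' element $n-1$ to $0$. I would split into two cases. First, if $n-1 \notin S$, then $\sigma_n(a_1) < \sigma_n(a_2) < \cdots < \sigma_n(a_t)$, so $\sigma_n \upharpoonright S$ has zero inversions and is even. Second, if $n-1 \in S$, then $a_t = n-1$ and $\sigma_n(a_t) = 0$, while $\sigma_n(a_1), \dots, \sigma_n(a_{t-1})$ are all positive and still in increasing order; hence the only inversions are the pairs $(a_i, a_t)$ for $i = 1, \dots, t-1$, giving exactly $t-1$ inversions.

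Combining, $\sigma_n \upharpoonright S$ is even for every $t$-subset $S$ if and only if $t-1$ is even, i.e., $t$ is odd. For the ``only if'' direction, one just needs to exhibit a single $t$-subset containing $n-1$, which exists because $1 \le t \le n$. This finishes both directions.

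There is essentially no obstacle here; the only mild point is to remember that the case $n-1 \notin S$ also needs to be explicitly noted (so that one sees the parity condition is governed entirely by the subsets that ``wrap around'').
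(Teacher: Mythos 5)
Your proof is correct and takes essentially the same route as the paper: both count inversions of $\sigma_n$ restricted to a $t$-subset directly, using the fact that the only disorder comes from the wrap-around element $n-1$. The only cosmetic difference is that you pin down the inversion count as exactly $0$ or $t-1$, whereas the paper uses the general formula $p(t-p)$ for a rotated increasing sequence in the sufficiency direction and then exhibits the specific subset $\{0,\dots,t-2,n-1\}$ for necessity.
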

\begin{proof}
Suppose $t$ is odd. Consider a $t$-element subset of $[n]$ consisting of elements $a_1 < \cdots < a_t$. Since $\sigma_n$ is a cyclic shift, we have $\alpha(a_{p+1}) < \cdots < \alpha(a_t) < \alpha(a_1)  < \cdots < \alpha(a_{p})$ for some $p$. Hence, the number of inversions of the restriction of $\alpha$ to $a_1, \dots, a_t$ can be expressed as $p(t-p)$. Since $t$ is odd, this product is even (irrespective of the value of $p$), hence, $\sigma_n \in \Gamma_n^t$.

Now suppose $t$ is even. Recall that $t \le n-2$. Let $S = \{ 0, \dots, t-2 \} \cup \{ n-1 \}$. The elements $0, \dots, t-2 \in [n]$ are mapped to $1, \dots, t$, respectively, by $\sigma_n$. At the same time, $\sigma_n(n-1) = 0$. Hence, $\mathrm{inv}(\sigma_n \upharpoonright S) = t-1$. Since $t-1$ is an odd number, $\sigma_n \notin \Gamma_n^t$.
\end{proof}

\begin{lemma} \label{lem:rho-in-Gamma}
$\rho_n \in \Gamma_n^t$ if an only if $t = 0 \;(\mathrm{mod} \; 4)$ or $t = 1 \;(\mathrm{mod} \; 4)$.
\end{lemma}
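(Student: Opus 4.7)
The plan is to observe that $\rho_n$ reverses order globally, so for any $t$-element subset $S = \{a_1 < a_2 < \cdots < a_t\}$ of $[n]$, the images satisfy $\rho_n(a_1) > \rho_n(a_2) > \cdots > \rho_n(a_t)$. In particular, the restriction of $\rho_n$ to $S$ is strictly decreasing, hence every pair is an inversion, and $\mathrm{inv}(\rho_n \upharpoonright S) = \binom{t}{2}$ regardless of which $t$-element subset $S$ we pick.

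Consequently, $\rho_n \in \Gamma_n^t$ if and only if $\binom{t}{2} = \frac{t(t-1)}{2}$ is even. This is a purely arithmetic condition on $t$, which I would verify by splitting into the four residue classes of $t$ modulo $4$: when $t \equiv 0 \pmod 4$ or $t \equiv 1 \pmod 4$, exactly one of the two consecutive integers $t, t-1$ is a multiple of $4$ while the other is odd, so $\binom{t}{2}$ is even; when $t \equiv 2 \pmod 4$ or $t \equiv 3 \pmod 4$, the even one of $t, t-1$ is merely $2$ times an odd number, so $\binom{t}{2}$ is odd.

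There is no real obstacle here; the only thing one has to be slightly careful about is that the statement is asserted for all $t$ (not just the range $2,\dots,n-2$ treated in the preceding lemmas), so I would note that the formula $\mathrm{inv}(\rho_n \upharpoonright S) = \binom{t}{2}$ is valid for every $t \in \{1, \dots, n\}$ and every $t$-element subset $S$, which makes the equivalence uniform across all relevant values of $t$. Combining this uniform computation with the mod-$4$ case analysis gives the claim.
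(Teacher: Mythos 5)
Your proof is correct and is essentially identical to the paper's: both note that $\rho_n$ reverses the order of any $t$-element subset, so every pair is an inversion, giving $\mathrm{inv}(\rho_n \upharpoonright S) = \frac{t(t-1)}{2}$, and then check the parity of this quantity by residue of $t$ modulo $4$. Your extra remark that the computation is uniform over all $t$ is a harmless refinement of the same argument.
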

\begin{proof}
As $\rho_n$ acts on a $t$-element subset of $[n]$ consisting of elements $a_1 < \cdots < a_t$, it reverses their order, $\alpha(a_t) < \cdots < \alpha(a_1)$. Thus, $\mathrm{inv}(\rho_n \upharpoonright \{ a_1, \dots, a_t \}) = \frac{t(t-1)}{2}$. The number expressed by this fraction is even if and only if $t = 0 \;(\mathrm{mod} \; 4)$ or $t = 1 \;(\mathrm{mod} \; 4)$.
\end{proof}

From Lemmas \ref{lem:in-D}, \ref{lem:sigma-in-Gamma} and \ref{lem:rho-in-Gamma} our main result follows.
\begin{theorem}\label{2ton-2}
For every $t = 2, \dots, n-2$, 
\begin{itemize}
    \item if $t = 2  \;(\mathrm{mod} \; 4)$ then $\Gamma_n^t$ is the one-element group $\{\iota_n\}$;
    \item if $t = 3  \;(\mathrm{mod} \; 4)$ then $\Gamma_n^t$ is the cyclic group $Z_n$;
    \item if $t = 0  \;(\mathrm{mod} \; 4)$ then $\Gamma_n^t$ is the two-element group $\{\rho_n, \iota_n\}$;
    \item if $t = 1  \;(\mathrm{mod} \; 4)$ then $\Gamma_n^t$ is the dihedral group $D_n$.
\end{itemize}
\end{theorem}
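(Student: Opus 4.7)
By Lemma~\ref{lem:in-D}, $\Gamma_n^t \subseteq D_n$, so each element of $\Gamma_n^t$ is either a rotation $\sigma_n^k$ or a reflection $\sigma_n^k\rho_n$ for some $0 \le k \le n-1$. I will then split on the parity of $t$.

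If $t$ is odd, the three lemmas already suffice via a short group-theoretic argument. Lemma~\ref{lem:sigma-in-Gamma} gives $Z_n \subseteq \Gamma_n^t$. When $t \equiv 1 \pmod 4$, Lemma~\ref{lem:rho-in-Gamma} also puts $\rho_n$ in $\Gamma_n^t$, hence $\Gamma_n^t \supseteq \langle\sigma_n,\rho_n\rangle = D_n$ and equality follows. When $t \equiv 3 \pmod 4$, Lemma~\ref{lem:rho-in-Gamma} excludes $\rho_n$; but any reflection $\sigma_n^k\rho_n$ in $\Gamma_n^t$ would combine with $\sigma_n^{-k} \in Z_n \subseteq \Gamma_n^t$ to yield $\rho_n \in \Gamma_n^t$, a contradiction, so $\Gamma_n^t = Z_n$.

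If $t$ is even, the three lemmas leave too many elements of $D_n$ uncontrolled: they say nothing about $\sigma_n^k$ or $\sigma_n^k\rho_n$ for $k \ne 0, 1$. My plan is to recycle the inversion counts in the proofs of Lemmas~\ref{lem:sigma-in-Gamma} and~\ref{lem:rho-in-Gamma} at arbitrary $k$. For a $t$-subset $S$, set $p = |S\cap[0,n-k-1]|$ and $r = |S\cap[0,k-1]|$. Splitting $S$ by wrapping behaviour under $\sigma_n^k$, or by position relative to $k$ under $\sigma_n^k\rho_n$, and reading off the image sequences yields
\begin{equation*}
\mathrm{inv}(\sigma_n^k \upharpoonright S) = p(t-p), \qquad \mathrm{inv}(\sigma_n^k\rho_n \upharpoonright S) = \binom{r}{2} + \binom{t-r}{2}.
\end{equation*}
As $S$ ranges over $t$-subsets, $p$ and $r$ fill the intervals $[\max(0,t-k),\min(t,n-k)]$ and $[\max(0,t-n+k),\min(t,k)]$ respectively; using $1 \le k \le n-1$ and $t \le n-2$, a short four-subcase check shows each interval has length at least $2$, hence contains integers of both parities.

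Since $t$ is even, $p(t-p)$ is odd iff $p$ is odd, so for $k \ne 0$ some admissible $S$ makes $\mathrm{inv}(\sigma_n^k \upharpoonright S)$ odd, showing $\sigma_n^k \notin \Gamma_n^t$. A direct parity calculation of $\binom{r}{2}+\binom{t-r}{2}\pmod 2$ in terms of $r \pmod 4$ then shows this sum is odd precisely when $r$ is odd (if $t \equiv 0 \pmod 4$) and precisely when $r$ is even (if $t \equiv 2 \pmod 4$); in either sub-case the admissible interval supplies an $r$ of the forbidden parity, so no $\sigma_n^k\rho_n$ with $k \ne 0$ lies in $\Gamma_n^t$ either. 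Combined with Lemma~\ref{lem:rho-in-Gamma} applied at $k = 0$, this gives $\Gamma_n^t = \{\iota_n, \rho_n\}$ for $t \equiv 0 \pmod 4$ and $\Gamma_n^t = \{\iota_n\}$ for $t \equiv 2 \pmod 4$. The main obstacle I anticipate is bundling the two interval-length checks with the parity bookkeeping for $\binom{r}{2}+\binom{t-r}{2}$ into a clean narrative; each individual step is elementary.
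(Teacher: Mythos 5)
Your proof is correct, and its skeleton is the same as the paper's: reduce to $D_n$ via Lemma~\ref{lem:in-D}, then decide which elements of $D_n$ lie in $\Gamma_n^t$. The difference is that the paper simply asserts the theorem ``follows from'' Lemmas~\ref{lem:in-D}, \ref{lem:sigma-in-Gamma} and \ref{lem:rho-in-Gamma}, and that deduction is genuinely immediate only for odd $t$: there $Z_n\subseteq\Gamma_n^t\subseteq D_n$ and $Z_n$ has index $2$ in $D_n$, so membership of $\rho_n$ decides everything, exactly as you argue. For even $t$ the three lemmas exclude only the two generators $\sigma_n$ and (possibly) $\rho_n$, while a subgroup of $D_n$ can contain a rotation $\sigma_n^k$ with $\gcd(k,n)>1$, or a reflection other than $\rho_n$, without containing either generator; so some additional computation is unavoidable, and you supply exactly that. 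Your formulas $\mathrm{inv}(\sigma_n^k\upharpoonright S)=p(t-p)$ and $\mathrm{inv}(\sigma_n^k\rho_n\upharpoonright S)=\binom{r}{2}+\binom{t-r}{2}$ are correct (the two blocks $\{0,\dots,n-k-1\}$ and $\{n-k,\dots,n-1\}$, respectively $\{0,\dots,k-1\}$ and $\{k,\dots,n-1\}$, are each mapped monotonically, with the stated cross-block behaviour), the reduction of the second count to $r+\binom{t}{2}\pmod 2$ for even $t$ is right, and the four-subcase check that the admissible ranges of $p$ and $r$ contain two consecutive integers does go through under $1\le k\le n-1$ and $2\le t\le n-2$ (the hypothesis $t\le n-2$ is what saves the last subcase). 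So your write-up is not merely a valid alternative but a completion of a step the paper leaves implicit; note also that your rotation computation is the natural generalisation of the count $p(t-p)$ already appearing in the paper's proof of Lemma~\ref{lem:sigma-in-Gamma}, so the two arguments fit together seamlessly.
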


\section{Group $\Gamma_n^{n-1}$}

In this section we will use a type of permutations called parity-alternating permutations \cite{tanimoto2010parity,tanimoto2010combinatorics}. A permutation $\alpha$ on $[n]$ is called \textit{parity-alternating} if for every $a = 0, \dots, n-2$ (we do not consider $a=n-1$) we have $\alpha(a) \neq \alpha(a+1)  \;(\mathrm{mod} \; 2)$. Denote the set of all parity-alternating permutations on $[n]$ by $PAP_n$. It is not difficult to find a formula for the size of $PAP_n$, and that has been done in \cite{tanimoto2010parity}. The size $|PAP_n|$ is $2\cdot(\frac{n}{2})!^2$ if $n$ is even or $(\frac{n-1}{2})!\cdot(\frac{n+1}{2})!$ if $n$ is odd. We will prove that every permutation in $\Gamma_n^{n-1}$ is parity-alternating; to be more precise, the permutations belonging to $\Gamma_n^{n-1}$ constitute exactly a half of $PAP_n$; hence, $|\Gamma_n^{n-1}| = \frac{1}{2}\cdot |PAP_n|$.

Each $(n-1)$-element subset of $[n]$ can be expressed as $[n] \setminus \{ a \}$ for some $a \in [n]$; in this section we will denote this set by $S_a$.

\begin{lemma} \label{lem:in-PAP-all-equal}
If $\alpha \in PAP_n$ then $\mathrm{inv}(\alpha \upharpoonright S_a) = \mathrm{inv}(\alpha \upharpoonright S_b) \;(\mathrm{mod} \; 2)$ for any $a, b \in [n]$.
\end{lemma}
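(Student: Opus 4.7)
The plan is to reduce the lemma to comparing consecutive indices: it suffices to show, for each $a = 0, 1, \dots, n-2$, that $\mathrm{inv}(\alpha \upharpoonright S_a) \equiv \mathrm{inv}(\alpha \upharpoonright S_{a+1}) \pmod{2}$, since chaining these equalities together yields $\mathrm{inv}(\alpha \upharpoonright S_0) \equiv \mathrm{inv}(\alpha \upharpoonright S_1) \equiv \cdots \equiv \mathrm{inv}(\alpha \upharpoonright S_{n-1}) \pmod 2$.

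For such a consecutive comparison I would apply Lemma~\ref{lem:between} with $S = [n] \setminus \{a, a+1\}$. Since $a \le n-2$, we have $\modn{a+1} = a+1$ and no modular wrap-around, so ``$\alpha(b)$ lies between $\alpha(a)$ and $\alpha(a+1)$'' simply means that $\alpha(b)$ is in the open interval between $\min(\alpha(a), \alpha(a+1))$ and $\max(\alpha(a), \alpha(a+1))$. Partition $S$ accordingly into elements $b_1, \dots, b_r$ whose images lie between $\alpha(a)$ and $\alpha(a+1)$ and elements $c_1, \dots, c_s$ whose images do not. Then $S \cup \{a\} = S_{a+1}$ and $S \cup \{a+1\} = S_a$, and Lemma~\ref{lem:between} gives
$$\mathrm{inv}(\alpha \upharpoonright S_{a+1}) \equiv \mathrm{inv}(\alpha \upharpoonright S_a) + r \pmod{2}.$$

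The key computation is that $r$ is even. Since $\alpha$ is a permutation, the restriction of $\alpha$ to $S$ is a bijection onto $[n] \setminus \{\alpha(a), \alpha(a+1)\}$, so $r$ counts exactly the integers strictly between $\alpha(a)$ and $\alpha(a+1)$; that is, $r = |\alpha(a+1) - \alpha(a)| - 1$. Because $\alpha$ is parity-alternating, $\alpha(a)$ and $\alpha(a+1)$ have different parities, so their difference is odd and $r$ is even. Hence $\mathrm{inv}(\alpha \upharpoonright S_a) \equiv \mathrm{inv}(\alpha \upharpoonright S_{a+1}) \pmod 2$, completing the inductive step.

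I do not anticipate a real obstacle here; the only point that requires care is verifying that the hypothesis of Lemma~\ref{lem:between} is met by the chosen partition and that the ``between'' count collapses cleanly to $|\alpha(a+1) - \alpha(a)| - 1$, which is forced to be even precisely by the parity-alternating condition. This last equivalence is what makes the parity-alternating hypothesis exactly the right one for the conclusion.
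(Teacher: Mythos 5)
Your proposal is correct and follows essentially the same route as the paper: reduce to consecutive indices, apply Lemma~\ref{lem:between} with $S = [n] \setminus \{a, a+1\}$, and observe that the number $r$ of images strictly between $\alpha(a)$ and $\alpha(a+1)$ equals $|\alpha(a+1)-\alpha(a)|-1$, which is even precisely because $\alpha$ is parity-alternating. Your write-up is in fact slightly more explicit than the paper's about identifying $S \cup \{a\}$ with $S_{a+1}$ and $S \cup \{a+1\}$ with $S_a$.
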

\begin{proof}
Consider $\alpha \in PAP_n$. Fix $a \in \{ 0, \dots, n-2 \}$. Since $a \neq n-1$, the number $r$ of elements of $[n]$ lying between $\alpha(a)$ and $\alpha(a+1)$ is $|\alpha(a) - \alpha(a+1)|-1$.
Since $\alpha \in PAP_n$, the difference between $\alpha(a)$ and $\alpha(a+1)$ is odd; hence, $r$ is even. 
By Lemma \ref{lem:between}, we conclude that $\mathrm{inv}(\alpha \upharpoonright S_a) = \mathrm{inv}(\alpha \upharpoonright S_{a+1}) \;(\mathrm{mod} \; 2)$. This equality is true for every $a \in \{ 0, \dots, n-2 \}$. Hence, either for every $a \in [n]$ the number $\mathrm{inv}(\alpha \upharpoonright S_a)$ is even, or for every $a \in [n]$ the number $\mathrm{inv}(\alpha \upharpoonright S_a)$ is odd. 
\end{proof}

Recall that $\Gamma_n^{n-1}$ consists of all permutations on $[n]$ such that for every $a \in [n]$ the size of $\mathrm{inv}(\alpha \upharpoonright S_a)$ is even. It will be useful to consider a set of all permutations on $[n]$ such that for every $a \in [n]$ the number $\mathrm{inv}(\alpha \upharpoonright S_a)$ is odd (instead of even); we will denote this set by $-\Gamma_n^{n-1}$.

\begin{lemma} \label{lem:Gamma-in-PAP}
$PAP_n$ is a disjoint union of $\Gamma_n^{n-1}$ and $-\Gamma_n^{n-1}$.
\end{lemma}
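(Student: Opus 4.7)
The plan is to prove the two halves of the claim separately: disjointness and equality of the union with $PAP_n$.

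First I would dispatch disjointness. By definition, every permutation in $\Gamma_n^{n-1}$ has $\mathrm{inv}(\alpha \upharpoonright S_a)$ even for every $a \in [n]$, while every permutation in $-\Gamma_n^{n-1}$ has all these numbers odd. So the two sets cannot overlap (picking any single $a \in [n]$ gives a contradiction).

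Next, the inclusion $PAP_n \subseteq \Gamma_n^{n-1} \cup -\Gamma_n^{n-1}$ is essentially immediate from Lemma \ref{lem:in-PAP-all-equal}. That lemma says that for $\alpha \in PAP_n$ all of the numbers $\mathrm{inv}(\alpha \upharpoonright S_a)$, $a \in [n]$, share a common parity. If that parity is even, $\alpha \in \Gamma_n^{n-1}$; if odd, $\alpha \in -\Gamma_n^{n-1}$.

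The remaining direction is $\Gamma_n^{n-1} \cup -\Gamma_n^{n-1} \subseteq PAP_n$, which I would prove contrapositively by mirroring the proof of Lemma \ref{lem:in-PAP-all-equal}. Suppose $\alpha \notin PAP_n$. Then there exists $a \in \{0, \dots, n-2\}$ with $\alpha(a) \equiv \alpha(a+1) \pmod 2$. Since $a < n-1$ we have $\modn{a+1} = a+1$, and the number $r = |\alpha(a) - \alpha(a+1)| - 1$ of elements of $[n]$ strictly between $\alpha(a)$ and $\alpha(a+1)$ is now \emph{odd} (the crux: this is exactly the place where the previous lemma used ``$r$ is even''). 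Applying Lemma \ref{lem:between} with $S = [n] \setminus \{a, a+1\}$, $S' = S_{a+1}$ and $S'' = S_{a}$ yields
$$\mathrm{inv}(\alpha \upharpoonright S_a) \neq \mathrm{inv}(\alpha \upharpoonright S_{a+1}) \;(\mathrm{mod} \; 2),$$
so the parities of $\mathrm{inv}(\alpha \upharpoonright S_b)$ are not all equal as $b$ ranges over $[n]$, meaning $\alpha$ lies in neither $\Gamma_n^{n-1}$ nor $-\Gamma_n^{n-1}$.

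There is no real obstacle here; the statement is effectively a corollary of Lemma \ref{lem:in-PAP-all-equal} together with the observation that the parity condition in its proof is in fact an ``if and only if'' at each consecutive pair $a, a+1$. The only care needed is to confirm that the index $a$ witnessing the failure of the parity-alternating property lies in $\{0, \dots, n-2\}$ (so that $\modn{a+1} = a+1$ and Lemma \ref{lem:between} can be applied cleanly), which is built into the definition of $PAP_n$.
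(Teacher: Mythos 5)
Your proof is correct and follows essentially the same route as the paper's: the inclusion $PAP_n \subseteq \Gamma_n^{n-1} \cup -\Gamma_n^{n-1}$ is read off from Lemma \ref{lem:in-PAP-all-equal}, and the reverse inclusion uses Lemma \ref{lem:between} with $S=[n]\setminus\{a,a+1\}$ and the parity of $r=|\alpha(a)-\alpha(a+1)|-1$, which you simply run in contrapositive form where the paper argues directly. Your explicit remark on disjointness and the check that $\modn{a+1}=a+1$ for $a\le n-2$ are fine, minor additions.
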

\begin{proof}
Consider $\alpha \in PAP_n$. By Lemma \ref{lem:in-PAP-all-equal}, either for every $a \in [n]$ the number $\mathrm{inv}(\alpha \upharpoonright S_a)$ is even, or for every $a \in [n]$ the number $\mathrm{inv}(\alpha \upharpoonright S_a)$ is odd.
In the former case, $\alpha$ belongs to $\Gamma_n^{n-1}$, and in the latter case, $\alpha$ belongs to $-\Gamma_n^{n-1}$.

Conversely, consider a permutation $\alpha \in \Gamma_n^{n-1}$ or $\alpha \in -\Gamma_n^{n-1}$. Consider $a \in \{ 0, \dots, n-2 \}$. 
By Lemma \ref{lem:between}, $\mathrm{inv}(\alpha \upharpoonright S_a) = \mathrm{inv}(\alpha \upharpoonright S_{a+1}) + r \;(\mathrm{mod} \; 2)$, where $r$ of elements of $[n]$ lying between $\alpha(a)$ and $\alpha(a+1)$.
If $\alpha \in \Gamma_n^{n-1}$, both $\mathrm{inv}(\alpha \upharpoonright S_a)$ and $\mathrm{inv}(\alpha \upharpoonright S_{a+1})$ are even, hence, $r$ is even. If $\alpha \in \Gamma_n^{n-1}$, both $\mathrm{inv}(\alpha \upharpoonright S_a)$ and $\mathrm{inv}(\alpha \upharpoonright S_{a+1})$ are odd, hence, $r$ is even. Hence, the difference between $\alpha(a)$ and $\alpha(a+1)$ is odd, and $\alpha$ is parity-alternating. 
\end{proof}

\begin{theorem}
$|\Gamma_n^{n-1}| = \frac{1}{2}\cdot |PAP_n|$.
\end{theorem}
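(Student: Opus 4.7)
The plan is as follows. By Lemma~\ref{lem:Gamma-in-PAP}, $PAP_n$ is the disjoint union of $\Gamma_n^{n-1}$ and $-\Gamma_n^{n-1}$, so it suffices to exhibit a bijection between these two sets. I would construct this bijection as an involution $\alpha \mapsto \tau\alpha$ on $PAP_n$, where $\tau$ is the transposition of $[n]$ that swaps $0$ and $2$ while fixing every other element. (This requires $n \ge 3$, which covers the only range of interest, since the statement presupposes $n-1 \ge 2$.)

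First I would verify that the map preserves $PAP_n$: because $\tau$ exchanges two values of the same parity, $\tau(\alpha(a))$ has the same parity as $\alpha(a)$ for every $a$, so parity-alternation is preserved. Clearly $\tau^2 = \iota_n$, so the map is indeed an involution on $PAP_n$.

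The crucial step is to show that $\mathrm{inv}(\tau\alpha \upharpoonright S_a) \not\equiv \mathrm{inv}(\alpha \upharpoonright S_a) \pmod 2$ for every $\alpha \in PAP_n$ and every $a \in [n]$; this immediately implies that the involution interchanges $\Gamma_n^{n-1}$ and $-\Gamma_n^{n-1}$, giving $|\Gamma_n^{n-1}| = |{-\Gamma_n^{n-1}}|$ and hence the theorem. The matrix/determinant argument used in the proof of Lemma~\ref{lem:product-is-even} extends to the parity-additivity formula $\mathrm{inv}(\tau\alpha \upharpoonright S_a) \equiv \mathrm{inv}(\alpha \upharpoonright S_a) + \mathrm{inv}(\tau \upharpoonright S_{\alpha(a)}) \pmod 2$, so the whole statement reduces to a finite check that $\mathrm{inv}(\tau \upharpoonright S_b)$ is odd for every $b \in [n]$. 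The permutation $\tau$ has exactly three inversions on $[n]$, namely $(0,1)$, $(0,2)$, $(1,2)$; removing any $b \in \{0,1,2\}$ from the domain kills exactly two of them, while removing any $b \ge 3$ kills none, so in either case the restricted inversion count is odd.

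The main obstacle is really just the parity-additivity formula for restricted inversion counts, which is a mild extension of what is already stated in Lemma~\ref{lem:product-is-even}; once it is in hand, everything reduces to the elementary case check described above.
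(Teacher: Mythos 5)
Your proposal is correct and is essentially the paper's own argument: the paper also constructs a fixed-point-free involution on $PAP_n$ by composing with a transposition of two same-parity elements (there, pre-composition with the cycle $(1\;3)$ rather than your post-composition with $(0\;2)$) and shows it swaps $\Gamma_n^{n-1}$ with $-\Gamma_n^{n-1}$. Your verification via the sign-multiplicativity of restricted inversion counts, reducing everything to checking that $\mathrm{inv}(\tau \upharpoonright S_b)$ is odd for all $b$, is a slightly tidier piece of bookkeeping than the paper's direct comparison of inversion sets, but the underlying idea is the same.
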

\begin{proof}

Recall that in the definition of $\Gamma_n^t$ we assume that $t \ge 2$, hence, when we consider $\Gamma_n^{n-1}$, we assume that $n \ge 3$. On the set $PAP_n$, consider an involution $\Xi$ induced by applying the cycle $(1 \; 3)$ before the permutation. That is, let $\xi$ be a permutation on $[n]$ swapping $1$ and $3$. For each $\alpha \in PAP_n$, the mapping $\Xi$ maps $\alpha$ to $\alpha(\xi(\cdot))$. Since $\xi \in PAP_n$ and $PAP_n$ is a group, it is clear that if $\alpha \in PAP_n$ then $\Xi(\alpha) \in PAP_n$; thus, $\Xi$ is, indeed, an involution on $PAP_n$.
Note that since $\alpha(1) \neq \alpha(3)$, the involution has no fixed points, that is, for every permutation $\alpha$ we have $\Xi(\alpha) \neq \alpha$. 

Now we will prove that $\Xi$ always maps an element of $\Gamma_n^{n-1}$ to an element of $-\Gamma_n^{n-1}$ and vice versa. Compare the set $I'$ of inversions of the restriction of $\alpha$ on $[n] \setminus \{ 1 \}$ and the set $I''$ of inversions of the restriction of $\alpha(\xi(\cdot))$ on $[n] \setminus \{ 3 \}$. Since $\xi$ swaps $1$ and $3$, the set $I'$ contains an inversion $2, 3$ if and only if the set $I''$ does not contain an inversion $1, 2$. All other inversions in $I'$ and $I''$ are the same (subject to changing notation from $3$ to $1$ when comparing $I'$ and $I''$). In terms of the number of elements, this means that either $|I'|$ is odd and $|I''|$ is even, or $|I'|$ is even and $|I''|$ is odd. Generalizing this observation using Lemma \ref{lem:in-PAP-all-equal}, we conclude that either $\alpha \in -\Gamma_n^{n-1}$ and $\Xi(\alpha) \in \Gamma_n^{n-1}$, or $\alpha \in \Gamma_n^{n-1}$ and $\Xi(\alpha) \in -\Gamma_n^{n-1}$. 

The existence of the involution $\Xi$ proves the result.
\end{proof}

As one compares the sizes of $\Gamma_n^{n-1}$ and $\Gamma_n^{n}=A_n$ (for example, looking at the table in Section \ref{sec:definition-groups}), one can notice that $|\Gamma_n^{n-1}|$ is much smaller than $|\Gamma_n^{n}|$, and one can be tempted to conjecture that $\Gamma_n^{n-1} \subseteq \Gamma_n^{n}$. This is not true if $n$ is even; indeed, we have $\sigma_n \in \Gamma_n^{n-1}$ and $\sigma_n \notin \Gamma_n^{n}$. However, it is true that $\Gamma_n^{n-1} \subseteq \Gamma_n^{n}$ if $n$ is odd, as the following result shows. 

\begin{proposition}\label{grnew}
If $n$ is odd then $\Gamma_n^{n-1} = PAP_n \cap \Gamma_n^{n}$.
\end{proposition}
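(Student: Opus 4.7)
The plan is to exploit the double-counting identity
$$\sum_{a \in [n]} \mathrm{inv}(\alpha \upharpoonright S_a) \;=\; (n-2)\cdot \mathrm{inv}(\alpha),$$
which holds because every pair $(c,d)$ with $c<d$ and $\alpha(c)>\alpha(d)$ is an inversion of $\alpha\upharpoonright S_a$ exactly for those $a\in[n]\setminus\{c,d\}$, giving $n-2$ contributions. I would state this identity at the start of the proof and then read it modulo $2$; since $n$ is odd, the factor $n-2$ is odd, so the parities of $\mathrm{inv}(\alpha)$ and of $\sum_a \mathrm{inv}(\alpha\upharpoonright S_a)$ coincide.

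For the forward inclusion, take $\alpha\in\Gamma_n^{n-1}$. By Lemma \ref{lem:Gamma-in-PAP}, already $\alpha\in PAP_n$. Moreover each summand $\mathrm{inv}(\alpha\upharpoonright S_a)$ is even, so the sum is even, and the identity then forces $\mathrm{inv}(\alpha)$ itself to be even. Hence $\alpha\in A_n=\Gamma_n^n$, which gives $\Gamma_n^{n-1}\subseteq PAP_n\cap \Gamma_n^n$.

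For the reverse inclusion, take $\alpha\in PAP_n\cap \Gamma_n^n$. By Lemma \ref{lem:Gamma-in-PAP}, either $\alpha\in\Gamma_n^{n-1}$ or $\alpha\in -\Gamma_n^{n-1}$, and I only need to exclude the latter. If $\alpha\in -\Gamma_n^{n-1}$, then each of the $n$ summands $\mathrm{inv}(\alpha\upharpoonright S_a)$ is odd; since $n$ is odd, their sum is odd. But $\alpha\in\Gamma_n^n=A_n$ means $\mathrm{inv}(\alpha)$ is even, so the right-hand side $(n-2)\cdot\mathrm{inv}(\alpha)$ of the identity is even, a contradiction. Therefore $\alpha\in\Gamma_n^{n-1}$.

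There is no real obstacle here; the argument is essentially a parity count, and the only thing to check carefully is the double-counting identity itself and the fact that the oddness of $n$ is used twice (once to make $n-2$ odd in the forward direction, and once to make a sum of $n$ odd numbers odd in the reverse direction). It is worth noting that the hypothesis that $n$ is odd is essential: if $n$ were even, then $n-2$ would be even and the identity modulo $2$ would give no information about $\mathrm{inv}(\alpha)$, which matches the counterexample $\sigma_n\in\Gamma_n^{n-1}\setminus \Gamma_n^n$ mentioned just before the proposition.
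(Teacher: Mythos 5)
Your proof is correct, and it takes a genuinely different route from the paper's. The paper works with the single identity $\mathrm{inv}(\alpha) = \mathrm{inv}(\alpha \upharpoonright S_0) + \alpha(0)$ (the inversions lost by deleting $0$ are exactly the $\alpha(0)$ pairs $(0,i)$ with $\alpha(i)<\alpha(0)$), and then uses the structural fact that for odd $n$ a parity-alternating permutation must send $0$ to an even value; the oddness of $n$ enters only through that last fact. You instead sum over all deleted points to get the global double-counting identity $\sum_{a\in[n]}\mathrm{inv}(\alpha\upharpoonright S_a) = (n-2)\cdot\mathrm{inv}(\alpha)$, which is valid, and read it modulo $2$; the oddness of $n$ enters twice, once as the oddness of the multiplier $n-2$ and once as the oddness of the number of summands. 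Both arguments lean on Lemma \ref{lem:Gamma-in-PAP} to reduce the all-$a$ condition to a single parity. What your version buys is self-containedness: you never need to know anything about where a parity-alternating permutation sends $0$, so the only input beyond the counting identity is the dichotomy of Lemma \ref{lem:Gamma-in-PAP}. What the paper's version buys is a sharper local statement (an exact relation between $\mathrm{inv}(\alpha)$ and one restricted inversion count, not just a relation modulo $2$), which also makes transparent exactly why the equivalence breaks for even $n$ through the parity of $\alpha(0)$. Your closing remark that for even $n$ the factor $n-2$ kills all mod-$2$ information, consistent with the counterexample $\sigma_n$, is a nice sanity check.
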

\begin{proof}
For any permutation $\alpha$ on $[n]$, obviously there are exactly $\alpha(0)$ elements $i$ in $[n]$ such that 
$\alpha(i) < \alpha(0)$. Hence, by the definition of an inversion, we have $\mathrm{inv}(\alpha) = \mathrm{inv}(\alpha \upharpoonright S_{0}) + \alpha(0)$.

Suppose $\alpha \in \Gamma_n^{n-1}$. Hence, by Lemma \ref{lem:Gamma-in-PAP}, $\alpha \in PAP_n$. Since $n$ is odd and $\alpha \in PAP_n$, we conclude that $\alpha(0)$ is even. By the definition of $\Gamma_n^{n-1}$, $\mathrm{inv}(\alpha \upharpoonright S_{0})$ is even; hence, using $\mathrm{inv}(\alpha) = \mathrm{inv}(\alpha \upharpoonright S_{0}) + \alpha(0)$, we conclude that $\mathrm{inv}(\alpha)$ is even, that is, $\alpha \in \Gamma_n^{n}$.

Conversely, suppose $\alpha \in PAP_n \cap \Gamma_n^{n}$. Since $n$ is odd and $\alpha \in PAP_n$, we conclude that $\alpha(0)$ is even. Since $\alpha \in \Gamma_n^{n}$, $\mathrm{inv}(\alpha)$ is even. Applying the argument in the previous paragraph in the opposite direction, we conclude that $\mathrm{inv}(\alpha \upharpoonright S_{0})$ is even, hence, $\alpha \in \Gamma_n^{n-1}$.
\end{proof}

\section{A generalized definition for monoids}

Now we proceed to considering monoids of mappings which generalize monoids of order-preserving mappings and monoids of orientation-preserving mappings. 

When we speak of a restriction of a mapping $\alpha$ on $[n]$ to a $t$-element subset of $[n]$, we call it a restriction of width $t$. For $t=1,2,\ldots,n$, we define $\Sigma_n^t$ as the monoid consisting of those mappings on $[n]$ whose all injective restrictions of width $t$ are even.

Suppose $S$ is a set of mappings on $[n]$, and let $k = 1, \dots, n$. We will denote by $S(\mathbf{r}=k)$ (by $S(\mathbf{r} \le k)$, by $S(\mathbf{r} \ge k)$) the subset of $S$ consisting of those mappings whose rank is equal to $k$ (is less or equal to $k$, is greater or equal to $k$). 

Recall that $S_n$ ($A_n$, $T_n$) is notation for the symmetric group (alternating group, symmetric semigroup) on the set $[n]$. 
We can immediately make the following easy observations. 

\begin{proposition} \label{prop:monoids-easy-observations}
1) $\Sigma_n^1=T_n$. 
\\2) $\Sigma_n^n=T_n(\mathbf{r} \le n-1)\cup A_n$.
\\3) $\Gamma_n^t=\Sigma_n^t\cap S_n$, that is, $\Gamma_n^t$ is the group of units of $\Sigma_n^t$ for $t=1,2,\ldots,n$. 
\\4) $\Sigma_n^t(\mathbf{r} \le t-1) = T_n(\mathbf{r} \le t-1)$ for $t=2,\ldots,n$. 
\end{proposition}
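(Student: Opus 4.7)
The plan is to treat each of the four claims as an essentially immediate consequence of unwinding definitions, using two trivial observations: a partial mapping whose domain has size at most one has no inversions (and so is vacuously even), and an injective restriction of $\alpha$ of width $t$ exists if and only if $\alpha$ maps some $t$-element subset of $[n]$ injectively, which in turn forces the rank of $\alpha$ to be at least $t$.

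For (1), I would note that for $t=1$ every width-one restriction has an empty set of pairs, hence zero inversions, hence is even; the defining condition for $\Sigma_n^1$ is then vacuous, so $\Sigma_n^1 = T_n$. For (4), if the rank of $\alpha$ is at most $t-1$ then by the pigeonhole principle no restriction of $\alpha$ to a $t$-element subset can be injective; so $\alpha$ trivially satisfies the membership condition for $\Sigma_n^t$, giving the nontrivial inclusion $T_n(\mathbf{r}\le t-1)\subseteq\Sigma_n^t(\mathbf{r}\le t-1)$, and the reverse inclusion is obvious.

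For (2), the existence of an injective restriction of width $n$ forces $\alpha$ to be a bijection, i.e., $\alpha\in S_n$. Thus a non-permutation lies in $\Sigma_n^n$ vacuously, while a permutation has exactly one injective width-$n$ restriction, namely $\alpha$ itself, and the evenness condition becomes the defining condition for the alternating group $A_n$. Taking the union gives the claim.

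For (3), I would observe that a permutation $\alpha\in S_n$ is injective on every subset, so every restriction of width $t$ is automatically injective; thus the defining condition of $\Sigma_n^t$ for $\alpha\in S_n$ coincides with the defining condition of $\Gamma_n^t$, giving $\Gamma_n^t=\Sigma_n^t\cap S_n$. For the group-of-units statement, any unit of a transformation monoid on a finite set must be a bijection, so the units of $\Sigma_n^t$ are exactly those of its elements that lie in $S_n$; that this set forms a group is already recorded in the discussion following Lemma \ref{lem:product-is-even}. There is no serious obstacle here; the only care needed is to handle the vacuous cases cleanly in (1), (2) and (4).
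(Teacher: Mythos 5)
Your proposal is correct, and it matches the paper's treatment: the paper states these four claims without proof as ``easy observations,'' precisely because each follows by unwinding the definitions exactly as you do (vacuity of the evenness condition when no injective width-$t$ restriction exists, and the fact that for a permutation every restriction is injective). Your handling of the group-of-units claim in (3) --- units must be bijections, and the bijections in $\Sigma_n^t$ form the group $\Gamma_n^t$ by the remark following Lemma \ref{lem:product-is-even} --- supplies the one detail worth making explicit.
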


\section{Monoids $\Sigma_n^2$ and $\Sigma_n^3$}

We denote by $O_n$, $M_n$, $OP_n$ and $OR_n$ the monoids of all order-preserving mappings, of all monotone mappings, of all orientation-preserving mappings and of all oriented mappings on $[n]$, respectively (the definitions of these types of mappings were given in Section \ref{sec:intro}). As we have said in Section \ref{sec:intro}, the monoid $O_n$ has been studied since the 1960s, and the other three $M_n$, $OP_n$ and $OR_n$ for more than two decades; see for example \cite{aizenstat1962defining,gomes1992ranks,catarino1999monoid,mcalister1998,fernandes2002,fernandesetal2005}. 


\begin{proposition}\label{t=2}
$\Sigma_n^2=O_n$.
\end{proposition}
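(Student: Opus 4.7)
The plan is to prove this by directly unfolding the definitions on both sides. The key observation is that for an injective partial mapping whose domain has just two elements $\{a, b\}$ with $a < b$, the number of inversions is either $0$ (when $\alpha(a) < \alpha(b)$) or $1$ (when $\alpha(a) > \alpha(b)$). Hence such a restriction is even if and only if $\alpha(a) < \alpha(b)$.

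From this reformulation, membership of $\alpha$ in $\Sigma_n^2$ is equivalent to the statement: for every pair $a < b$ in $[n]$ with $\alpha(a) \neq \alpha(b)$, we have $\alpha(a) < \alpha(b)$. Combining this with the trivially allowed case $\alpha(a) = \alpha(b)$ (which contributes no injective restriction to worry about), the condition becomes $\alpha(a) \leq \alpha(b)$ whenever $a < b$, which is precisely the defining property of an order-preserving mapping. Thus I would write out the two inclusions $\Sigma_n^2 \subseteq O_n$ and $O_n \subseteq \Sigma_n^2$ explicitly, each as a one-line consequence of this equivalence.

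Since the maximum number of inversions of a width-$2$ injective restriction is $1$, there is no arithmetic subtlety here and no obstacle to speak of; the whole proof is essentially a translation between two equivalent formulations of monotonicity. The only minor point to be careful about is to explicitly handle the case $\alpha(a) = \alpha(b)$ (non-injective restriction, vacuously not constrained by the definition of $\Sigma_n^2$) so that the conclusion $\alpha(a) \leq \alpha(b)$ (rather than the strict version) is recovered correctly.
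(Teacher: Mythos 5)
Your proposal is correct and follows essentially the same route as the paper: both arguments simply unfold the definitions, observing that a width-$2$ injective restriction is even precisely when it is order-preserving, and that non-injective pairs impose no constraint, so that $\Sigma_n^2$ coincides with $O_n$. Your version just spells out explicitly the bookkeeping (inversions equal to $0$ or $1$, the equality case $\alpha(a)=\alpha(b)$) that the paper's one-line proof leaves implicit.
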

\begin{proof} 
Indeed, one can reword the definition of $O_n$ as saying that a mapping $\alpha$ on $[n]$ belongs to $O_n$ if and only if each injective restriction of $\alpha$ of width $2$ is order-preserving.
\end{proof}

The following fact is easy to prove and well known, $O_n$ having been described in this way for the first time in \cite{aizenstat1962defining}.
\begin{proposition} \label{prop:generators-of-monoids}
The monoid $O_n$ is generated by $O_n(\mathbf{r} = n-1)$ and $\iota_n$. The monoids $M_n$, $OP_n$ and $OR_n$ are generated by $O_n \cup \{\rho_n\}$, $O_n \cup Z_n$ and $O_n\cup D_n$, respectively. In other words, each of these four monoids is generated by its elements of rank greater than or equal to $n-1$.
\end{proposition}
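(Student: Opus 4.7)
The plan is to treat the first claim, about $O_n$, separately from the other three, and then reduce each of the remaining three to the first via a short factorization through $O_n$. The first claim is the classical Aizenstat theorem from \cite{aizenstat1962defining}, which I would simply invoke: the rank-$(n-1)$ order-preserving maps are precisely the idempotents that identify one pair of consecutive elements $\{i,i+1\}$, and a short induction on the corank $n-\mathrm{rank}(\alpha)$ writes any $\alpha \in O_n$ as a product of such collapses, with $\iota_n$ alone responsible for rank $n$.

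For the remaining three monoids, the plan is to establish the set-theoretic factorizations $M_n \subseteq O_n\cdot\{\iota_n,\rho_n\}$, $OP_n \subseteq O_n\cdot Z_n$, and $OR_n \subseteq O_n\cdot D_n$. For $M_n$, if $\alpha$ is order-reversing then (since $\rho_n$ reverses order and squares to $\iota_n$) the composition $\alpha\circ\rho_n$ is order-preserving, giving $\alpha = (\alpha\circ\rho_n)\circ\rho_n$. For $OP_n$, the definition of a cyclic sequence supplies an index $i$ such that
\[
\alpha(i)\le \alpha(i+1)\le \cdots\le \alpha(n-1)\le \alpha(0)\le \cdots \le \alpha(i-1);
\]
the composition $\beta = \alpha\circ\sigma_n^{i}$ then sends $j$ to $\alpha(\modn{i+j})$ and is therefore order-preserving, so $\alpha = \beta\circ\sigma_n^{n-i}$. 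For $OR_n$, an orientation-reversing $\alpha$ has $\alpha\circ\rho_n \in OP_n$ directly from the cyclic/anti-cyclic definitions, so the previous step applies, and $\rho_n\in D_n$ absorbs the reversal.

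Combining these factorizations with the first claim yields the three generation statements; the reverse inclusions are trivial because $O_n$ and the named groups all lie inside the respective monoids. The ``in other words'' reformulation follows since the rank-$n$ elements of $M_n$, $OP_n$, $OR_n$ are precisely the monotone, orientation-preserving and oriented permutations, namely $\{\iota_n,\rho_n\}$, $Z_n$ and $D_n$, while $O_n$ is already covered by the first part. I do not expect any genuine obstacle: the Aizenstat step is well documented in the cited literature, and the only place requiring some care is tracking the direction of composition when verifying that $\alpha\circ\sigma_n^{i}$ is order-preserving for the $i$ extracted from the cyclic condition.
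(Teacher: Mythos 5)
Your proposal is correct. The paper itself gives no proof of this proposition, merely labelling it as easy and well known and citing the literature; the argument you supply --- Aizenstat's corank induction for $O_n$, together with the factorizations $M_n = O_n \cup O_n\rho_n$, $OP_n \subseteq O_n\cdot Z_n$ and $OR_n \subseteq O_n\cdot D_n$ obtained by pre-composing with a suitable element of $D_n$ to straighten the image sequence --- is exactly the standard one from the cited sources, and your identification of the rank-$n$ elements of $M_n$, $OP_n$, $OR_n$ with $\{\iota_n,\rho_n\}$, $Z_n$, $D_n$ correctly delivers the ``in other words'' reformulation.
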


\begin{proposition} \label{prop:embedding-of-classical-monoids}
For all $t=2,\ldots,n$, $O_n\subseteq\Sigma_n^t$. Also, for all $t = 2, \dots, n-2$,
\begin{itemize} 
    \item if $t = 3  \;(\mathrm{mod} \; 4)$ then $OP_n\subseteq\Sigma_n^t$;
    \item if $t = 0  \;(\mathrm{mod} \; 4)$ then $M_n\subseteq\Sigma_n^t$;
    \item if $t = 1  \;(\mathrm{mod} \; 4)$ then $OR_n\subseteq\Sigma_n^t$. 
\end{itemize}
\end{proposition}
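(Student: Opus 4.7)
The plan is to reduce the statement to a combination of the two already-established results: Theorem~\ref{2ton-2} (which identifies the group $\Gamma_n^t$ in each residue class) and Proposition~\ref{prop:generators-of-monoids} (which expresses each of $M_n$, $OP_n$, $OR_n$ in terms of $O_n$ together with the permutations $\rho_n$, $Z_n$, $D_n$ respectively). Since $\Sigma_n^t$ is a monoid, if we can show that every generator lies in $\Sigma_n^t$, then the whole submonoid they generate lies in $\Sigma_n^t$.

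First I would dispose of the inclusion $O_n\subseteq\Sigma_n^t$ for all $t=2,\ldots,n$. This is immediate from the definition: if $\alpha\in O_n$ and we restrict $\alpha$ to any $t$-element subset $\{a_1<\cdots<a_t\}$ on which the restriction is injective, then $\alpha(a_1)<\cdots<\alpha(a_t)$, so the restriction has zero inversions, and hence is even.

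Next I would handle the three modular cases uniformly. Recall from Proposition~\ref{prop:monoids-easy-observations}(3) that $\Gamma_n^t=\Sigma_n^t\cap S_n$, so any permutation belonging to $\Gamma_n^t$ automatically belongs to $\Sigma_n^t$. If $t\equiv 3\pmod 4$, Theorem~\ref{2ton-2} gives $Z_n=\Gamma_n^t\subseteq\Sigma_n^t$, and combined with $O_n\subseteq\Sigma_n^t$ and the fact that $OP_n$ is generated by $O_n\cup Z_n$ (Proposition~\ref{prop:generators-of-monoids}), we conclude $OP_n\subseteq\Sigma_n^t$. If $t\equiv 0\pmod 4$, Theorem~\ref{2ton-2} gives $\{\iota_n,\rho_n\}=\Gamma_n^t\subseteq\Sigma_n^t$, so $\rho_n\in\Sigma_n^t$; together with $O_n\subseteq\Sigma_n^t$ and the generation result $M_n=\langle O_n\cup\{\rho_n\}\rangle$, this yields $M_n\subseteq\Sigma_n^t$. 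Finally, if $t\equiv 1\pmod 4$, Theorem~\ref{2ton-2} gives $D_n=\Gamma_n^t\subseteq\Sigma_n^t$, and combining with $O_n\subseteq\Sigma_n^t$ and $OR_n=\langle O_n\cup D_n\rangle$ we obtain $OR_n\subseteq\Sigma_n^t$.

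There is no real obstacle here; the proof is essentially a bookkeeping exercise. The only point worth double-checking is that the argument is valid across the whole range $t=2,\ldots,n-2$: Theorem~\ref{2ton-2} is stated precisely for this range, and Proposition~\ref{prop:generators-of-monoids} is independent of $t$, so the two slot together without any edge-case trouble. The case $t\equiv 2\pmod 4$ is tacitly covered by the universal inclusion $O_n\subseteq\Sigma_n^t$ (in that case $\Gamma_n^t=\{\iota_n\}$, so no nontrivial permutation can be added, which is consistent with $O_n$ already being the relevant monoid).
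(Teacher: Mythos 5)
Your proof is correct and follows essentially the same route as the paper: establish $O_n\subseteq\Sigma_n^t$ directly from the definitions, then combine Theorem~\ref{2ton-2} with Proposition~\ref{prop:generators-of-monoids} and the fact that $\Sigma_n^t$ is a monoid. Your write-up merely makes explicit the steps the paper leaves implicit.
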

\begin{proof} 
It follows from the definitions of $O_n$ and $\Sigma_n^t$ that $O_n\subseteq\Sigma_n^t$. Combining $O_n\subseteq\Sigma_n^t$ with  Proposition \ref{prop:generators-of-monoids} and Theorem \ref{2ton-2}, we obtain the results in the bullet points.
\end{proof}

\begin{proposition}\label{t=3}
$\Sigma_n^3 = T_n(\mathbf{r} \le 2) \cup OP_n$.
\end{proposition}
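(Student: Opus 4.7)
The plan is to use the triples characterization of $OP_n$ recalled in Section \ref{sec:intro}: provided $\mathrm{rank}(\alpha) \neq 2$, a mapping $\alpha$ lies in $OP_n$ if and only if for every $a < b < c$ one of the three cyclic orderings $\alpha(a) \le \alpha(b) \le \alpha(c)$, $\alpha(b) \le \alpha(c) \le \alpha(a)$, or $\alpha(c) \le \alpha(a) \le \alpha(b)$ holds. The proposition follows once this characterization is matched, triple by triple, with the even-inversion condition defining $\Sigma_n^3$.

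For the inclusion $T_n(\mathbf{r} \le 2) \cup OP_n \subseteq \Sigma_n^3$, I would first note that any mapping of rank at most $2$ admits no injective restriction of width $3$, so the membership in $\Sigma_n^3$ is vacuous. If $\alpha \in OP_n$ has rank at least $3$, the triples characterization applies; when the images $\alpha(a), \alpha(b), \alpha(c)$ are pairwise distinct, the three listed orderings are exactly the three permutations of those images with an even number ($0$ or $2$) of inversions, so every injective restriction of width $3$ is even. The remaining elements of $OP_n$ have rank $\le 2$ and are already handled.

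For the reverse inclusion, fix $\alpha \in \Sigma_n^3$; if $\mathrm{rank}(\alpha) \le 2$ we are in the first piece of the union, so assume $\mathrm{rank}(\alpha) \ge 3$ and verify the triples condition. Take $a < b < c$. When $\alpha(a), \alpha(b), \alpha(c)$ are pairwise distinct, the restriction is injective, hence even by hypothesis, and the enumeration of the six orderings of three distinct values again identifies precisely the three cyclic orderings as the even ones. When two of the images coincide, a routine three-case split on which pair is equal, together with a single comparison against the remaining image, shows that one of the three cyclic inequalities is satisfied trivially. Hence every triple complies with the characterization, and since the rank exceeds $2$, $\alpha \in OP_n$.

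The one subtle point is the rank $2$ exception in the triples characterization of $OP_n$: a rank-$2$ pattern such as $0,1,0,1$ on $[4]$ satisfies the triple condition on every triple yet fails to be orientation-preserving. This is precisely why the right-hand side unions in all of $T_n(\mathbf{r} \le 2)$ rather than only the rank-$\le 2$ part of $OP_n$; this matches the vacuous rank-$\le 2$ behaviour on the left and the argument goes through without incident.
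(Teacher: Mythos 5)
Your proposal is correct and follows essentially the same route as the paper: both arguments hinge on matching the three even arrangements of an injective triple with the three cyclic orderings, and then invoking the known width-$3$ (triple) characterization of $OP_n$ for mappings of rank at least $3$, with the rank-$\le 2$ mappings absorbed vacuously into $T_n(\mathbf{r}\le 2)$. The only cosmetic difference is that the paper phrases the local characterization in terms of restrictions of width $3$ (noting non-injective ones are automatically orientation-preserving) and derives $OP_n\subseteq\Sigma_n^3$ from its generator result, whereas you work directly with the inequality form of the triple condition and check the non-injective triples by hand; these are interchangeable.
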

\begin{proof}
We have $\Sigma_n^3 \supseteq T_n(\mathbf{r} \le 2)$ by Proposition \ref{prop:monoids-easy-observations}, part 4). We have $\Sigma_n^3 \supseteq OP_n$ by Proposition \ref{prop:embedding-of-classical-monoids}. 

Now consider $\alpha \in \Sigma_n^3$. If the rank of $\alpha$ is $1$ or $2$ then $\alpha \in T_n(\mathbf{r} \le 2)$. Now suppose $\alpha$ has rank greater than or equal to $3$. Let $T=\{i<j<k\}\subseteq [n]$ be such that $\alpha \upharpoonright T$ is injective and suppose that $\alpha(T)=\{a<b<c\}$. Then $\alpha \upharpoonright T$ is presented by one of the following 6 cases.
$$
\left(\begin{smallmatrix}i&j&k\\a&b&c\end{smallmatrix}\right), 
\left(\begin{smallmatrix}i&j&k\\a&c&b\end{smallmatrix}\right),
\left(\begin{smallmatrix}i&j&k\\b&a&c\end{smallmatrix}\right),
\left(\begin{smallmatrix}i&j&k\\c&a&b\end{smallmatrix}\right),
\left(\begin{smallmatrix}i&j&k\\b&c&a\end{smallmatrix}\right),
\left(\begin{smallmatrix}i&j&k\\c&b&a\end{smallmatrix}\right)
$$

Inspecting these cases, we notice that $\alpha \upharpoonright T$ is orientation-preserving if and only if $\alpha \upharpoonright T$ is even, namely, in the following 3 cases.
$$
\left(\begin{smallmatrix}i&j&k\\a&b&c\end{smallmatrix}\right), 
\left(\begin{smallmatrix}i&j&k\\c&a&b\end{smallmatrix}\right),
\left(\begin{smallmatrix}i&j&k\\b&c&a\end{smallmatrix}\right)
$$

It is well known that a mapping $\alpha$ on $[n]$ with rank greater than or equal to $3$ belongs to $OP_n$ if and only if each restriction of $\alpha$ of width $3$ is orientation-preserving \cite{higgins2022orientation,fernandes2023CKMS,fernandes2024CKMS, higgins2024correction, levimitchell2006}. Let us reformulate this result. It is easy to notice that if a partial mapping has a domain of size $3$ and is not injective then it is orientation-preserving. Hence it follows that a mapping $\alpha$ on $[n]$ with rank greater than or equal to $3$ belongs to $OP_n$ if and only if each injective restriction of $\alpha$ of width $3$ is orientation-preserving. 

Combining the observations in the last two paragraphs, we conclude that if $\alpha \in \Sigma_n^3$ and $\alpha$ has rank greater than or equal to $3$ then $\alpha \in OP_n$.
\end{proof}

Comparing Propositions \ref{t=2} and \ref{t=3}, one can ask whether $\Sigma_n^3= OP_n$. The answer is that for $n\geq4$, $OP_n\subsetneqq\Sigma_n^3$; for example,   
$\left(\begin{smallmatrix}0&1&2&3&\cdots&n-1\\0&1&0&1&\cdots&1\end{smallmatrix}\right)\in\Sigma_n^3\setminus OP_n$. This example frequently features in the study of $OP_n$, including \cite{higgins2022orientation,fernandes2023CKMS,fernandes2024CKMS, higgins2024correction}.


Comparing Propositions \ref{t=3} and \ref{prop:embedding-of-classical-monoids}, one might conjecture that $\Sigma_n^4=T_n(\mathbf{r} \le 3)\cup M_n$ and $\Sigma_n^5=T_n(\mathbf{r} \le 4)\cup OR_n$. 
Both these equalities are false. For instance,  for $n\geq4$, 
$\alpha=\left(\begin{smallmatrix}0&\cdots&n-4&n-3&n-2&n-1\\2&\cdots&2&3&0&1\end{smallmatrix}\right)\in\Sigma_n^4$ 
but $\alpha\not\in T_n(\mathbf{r} \le 3)$ and $\alpha\not\in M_n$; and, for $n\geq 5$,  
$\beta=\left(\begin{smallmatrix}0&\cdots&n-5&n-4&n-3&n-2&n-1\\3&\cdots&3&4&1&2&0\end{smallmatrix}\right)\in\Sigma_n^5$ 
but $\beta\not\in T_n(\mathbf{r} \le 4)$ and $\beta\not\in OR_n$. 

\section{Describing monotone and oriented mappings}


Proposition \ref{prop:monoids-easy-observations}, part 4), describes $\Sigma_n^t(\mathbf{r} \le t-1)$ as being equal to $T_n(\mathbf{r} \le t-1)$. In the following section we will find out more about `the upper half' of $\Sigma_n^t$, namely, $\Sigma_n^t(\mathbf{r} \ge t+2)$. In this section we begin this discussion by proving some technical facts concerning semigroups $O_n, M_n, OP_n, OR_n$.


It is known that a mapping $\alpha$ on $[n]$ belongs to $OR_n$ if and only if each restriction  of $\alpha$ of width $4$ is oriented \cite{fernandes2023CKMS, higgins2022orientation}. The following statement reformulates this result in the language of injective restrictions of width $4$, and the proof is unexpectedly fiddly.


\begin{proposition}\label{w4}
Let $\alpha$ be a mapping on $[n]$ with rank greater than or equal to $4$. Then 
$\alpha\in OR_n$ if and only if each injective restriction of $\alpha$ of width $4$ is oriented. 
\end{proposition}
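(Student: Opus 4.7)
The forward direction is immediate: by the characterization cited in the paragraph preceding the statement, if $\alpha\in OR_n$ then every restriction of $\alpha$ of width $4$ is oriented, and in particular every injective one is.

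For the converse, my plan is to argue the contrapositive. Assume $\alpha$ has rank at least $4$ and $\alpha\notin OR_n$. By the cited result there is a subset $T=\{a_1<a_2<a_3<a_4\}$ of $[n]$ with $\alpha\upharpoonright T$ not oriented; if $\alpha\upharpoonright T$ is already injective we are done, so assume otherwise, so that its image sequence has $2$ or $3$ distinct values. Since $\alpha$ has rank at least $4$, the image of $\alpha$ contains at least one value not appearing in this sequence, and any such value has a pre-image outside $T$. The strategy is to build an injective $4$-subset of $[n]$ whose image sequence is still non-oriented by combining one or two such ``fresh'' pre-images with a carefully chosen subset of~$T$.

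The repeated check I will use is the easy observation that a length-$4$ sequence is non-oriented if and only if, read cyclically, it has at least two strict ascents and at least two strict descents. The case analysis then splits according to the number of distinct values of the image sequence of $\alpha\upharpoonright T$. If there are three distinct values, exactly one value $v$ is repeated, say at positions $a_i,a_j$; I pick a fresh value $z$ with pre-image $b^\ast\in[n]\setminus T$, and consider the two candidate $4$-subsets $(T\setminus\{a_i\})\cup\{b^\ast\}$ and $(T\setminus\{a_j\})\cup\{b^\ast\}$, both of which are injective with four distinct image values. I then verify that, for every one of the five intervals into which $b^\ast$ can fall relative to $T$, at least one of these two candidates has a non-oriented image sequence. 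If the image sequence of $\alpha\upharpoonright T$ uses only two distinct values, it is an alternating pattern $(x,y,x,y)$ up to cyclic rotation; I show that a single replacement of an appropriate $a_i$ by a pre-image of a fresh value $z$ produces a width-$4$ restriction that is still non-oriented but now uses three distinct values, and then invoke the previous paragraph using a second fresh value (which exists because the rank is at least~$4$).

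The main obstacle will be the case enumeration in the three-value situation, which is presumably the ``unexpectedly fiddly'' point the statement alludes to: for certain non-oriented length-$4$ patterns and certain positions of $z$ on the number line, one of the two candidate replacements happens to be oriented, and one has to check that the other one is then non-oriented. I expect this to reduce to a tedious but routine cross-tabulation of the finitely many non-oriented length-$4$ patterns (up to cyclic rotation and reversal) against the five possible intervals containing $b^\ast$, handled by direct inspection using the ascent/descent criterion above.
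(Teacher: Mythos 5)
Your plan is correct and matches the paper's proof in all essentials: both arguments locate a non-injective, non-oriented width-$4$ restriction and, in the three-value case, swap each of the two preimages of the repeated value for a fresh preimage and check that at least one of the two resulting injective restrictions remains non-oriented (the paper phrases this as deriving a contradiction from both being oriented, and uses the $D_n$-action to cut the twelve non-oriented patterns down to two representatives, which is your ``up to rotation and reversal'' reduction). Your two-step reduction of the two-value case to the three-value case via a first fresh value is a slightly tidier organization than the paper's direct fifteen-case analysis with two added elements, but it amounts to the same computation.
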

\begin{proof}
A restriction of an oriented mapping is an oriented partial mapping. So, it remains to prove the converse implication. 

Suppose each injective restriction of $\alpha$ of width $4$ is oriented but $\alpha$ is not oriented. 
Then there exists a non-injective restriction $\beta$ of $\alpha$ of width $4$ which is not oriented. 
Hence, $\beta$ has rank $2$ or $3$. 

Suppose that $\beta$ has rank $3$ and let $\dom\beta=\{i<j<k<\ell\}$ and $\im\beta=\{a<b<c\}$. 
Then the sequence of images of $\beta$ is one of the following $12$ cases: 
$(a,b,a,c)$, $(a,b,c,b)$, $(a,c,a,b)$, $(a,c,b,c)$, 
$(b,a,c,a)$, $(b,c,b,a)$, $(b,a,b,c)$, $(b,c,a,c)$, 
$(c,a,b,a)$, $(c,b,a,b)$, $(c,b,c,a)$, $(c,a,c,b)$. 
By multiplying by oriented permutations, i.e. by elements of $D_n$ (notice that $\alpha$ is oriented if and only if $\xi\alpha$ is oriented for all $\xi\in D_n$), we can reduce these $12$ cases to the following two cases, $(a,b,a,c)$ and $(a,b,c,b)$. 

To save space, below we present how to deal with the case $(a,b,a,c)$. For the other case, a similar argument can be given.  

Since the rank of $\alpha$ is at least $4$, there exists $m\in [n]$ such that the restriction of $\alpha$ to  
$\{i,j,k,\ell,m\}$ has rank $4$. Let $d=\alpha(m)$. 
We have $5$ possible cases, $m<i<j<k<\ell$, $i<m<j<k<\ell$, $i<j<m<k<\ell$, $i<j<k<m<\ell$ and $i<j<k<\ell<m$. 
To save space, below we present how to deal with the case $m<i<j<k<\ell$. For the others cases, one can use similar arguments. 

Let us consider the following two injective restrictions of $\alpha$ of width $4$: 
$\beta_1=\left(\begin{smallmatrix}m&i&j&\ell\\d&a&b&c\end{smallmatrix}\right)$ and 
$\beta_2=\left(\begin{smallmatrix}m&j&k&\ell\\d&b&a&c\end{smallmatrix}\right)$. 
Since $\beta_1$ is oriented, we have either $d<a<b<c$ or $a<b<c<d$; 
since $\beta_2$ is oriented, we have $c>d>b>a$.
As you can see, there is a contradiction between these inequalities, and this shows that this case cannot occur. 

Now suppose that $\beta$ has rank $2$. If $\im\beta=\{a<b\}$ then the sequence of images of $\beta$ is $(a,b,a,b)$ or 
$(b,a,b,a)$. As above, by multiplying by an oriented permutation, we can reduce these $2$ cases to one case $(a,b,a,b)$. Proceeding as above, consider a restriction of $\alpha$ of width $5$ and rank $4$ extending $\beta$. We have $15$ possible cases, each of which, with an argument similar to the one above, can be proved to be impossible. 
\end{proof}



The following theorem is a useful summary which lists the above result with several other similar results.  

\begin{theorem}\label{transversals}
Let $\alpha$ be a mapping on $[n]$. 
\\1) If the rank of $\alpha$ is at least $4$ then $\alpha\in OR_n$ if and only if each injective restriction of $\alpha$ of width $4$ is oriented.
\\2) If the rank of $\alpha$ is at least $4$ then $\alpha\in M_n$ if and only if each injective restriction of $\alpha$ of width $4$ is monotone.
\\3) If the rank of $\alpha$ is at least $3$ then $\alpha\in OP_n$ if and only if each injective restriction of $\alpha$ of width $3$ is orientation-preserving.
\\4) If the rank of $\alpha$ is at least $1$ then $\alpha\in O_n$ if and only if each injective restriction of $\alpha$ of width $2$ is order-preserving.
\end{theorem}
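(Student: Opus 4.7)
The plan is to dispose of the four items one by one, since three of them either restate a definition or repeat material already established in the excerpt. For (4), one direction is immediate, and conversely, if every injective width-$2$ restriction of $\alpha$ is order-preserving and $i < j$ in $[n]$, then either $\alpha(i) = \alpha(j)$ or $\alpha \upharpoonright \{i, j\}$ is injective and hence satisfies $\alpha(i) < \alpha(j)$, so $\alpha(i) \le \alpha(j)$ in either case and $\alpha \in O_n$.

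For (3), I would reuse the argument already given in the proof of Proposition \ref{t=3}: the literature \cite{higgins2022orientation, fernandes2023CKMS, fernandes2024CKMS, higgins2024correction, levimitchell2006} supplies the characterization in terms of all width-$3$ restrictions, and a non-injective partial mapping of width $3$ has image of size at most $2$ and is therefore automatically orientation-preserving, so the two formulations coincide. Item (1) is precisely the content of Proposition \ref{w4}.

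The substantive work is item (2), for which I would follow the template of Proposition \ref{w4}. One direction is clear. For the converse, I would assume that every injective width-$4$ restriction of $\alpha$ is monotone but $\alpha \notin M_n$; then some non-injective width-$4$ restriction $\beta$ of $\alpha$ fails to be monotone, and $\mathrm{rank}(\beta) \in \{2, 3\}$. The symmetry $\beta \mapsto \rho_n \beta$ preserves both the rank and the monotonicity status, allowing the list of offending image sequences on $\{i < j < k < \ell\}$ to be halved. Using $\mathrm{rank}(\alpha) \ge 4$, I would choose $m \in [n]$ so that $\alpha$ has rank $4$ on $\{i, j, k, \ell, m\}$ and write $d = \alpha(m)$; the value $d$ is then distinct from the elements of $\im \beta$. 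For each representative pattern of $\beta$ and each location of $m$ relative to $i, j, k, \ell$, at least two of the four injective width-$4$ sub-restrictions containing $m$ jointly impose mutually incompatible inequalities between $d$ and the values in $\im \beta$, yielding the required contradiction.

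The main obstacle is the sheer number of cases in item (2): because only the two-element symmetry $\{\iota_n, \rho_n\}$ preserves monotonicity (as opposed to the full dihedral action available in Proposition \ref{w4} for oriented sequences), more image patterns of $\beta$ must be treated individually. Each such case nevertheless reduces to the same style of two-inequality contradiction encountered in Proposition \ref{w4}, so the verification, while bulky, is mechanical once the representative patterns and the position of $m$ are fixed.
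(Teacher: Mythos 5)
Your treatment of items (1), (3) and (4) coincides with the paper's: (1) is Proposition \ref{w4}, (3) is the reformulation already carried out inside the proof of Proposition \ref{t=3}, and (4) is the two-point observation from Proposition \ref{t=2}. Where you genuinely diverge is item (2). The paper disposes of it in one line: since every monotone partial mapping is oriented, the hypothesis of (2) implies the hypothesis of (1), so $\alpha\in OR_n$; combined with the fact that an oriented mapping of rank at least $4$ whose injective width-$4$ restrictions are all monotone must itself be monotone (a short argument using the single-descent structure of a cyclic sequence), this gives $\alpha\in M_n$ with no new case analysis. You instead propose to rerun the entire machinery of Proposition \ref{w4} for monotonicity: classify the non-monotone, non-injective width-$4$ patterns of rank $2$ and $3$, reduce by the symmetry $\{\iota_n,\rho_n\}$, adjoin a fifth point $m$ realizing a fourth image value, and derive contradictions from pairs of injective width-$4$ sub-restrictions. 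This is workable -- your implicit use of the fact that $\alpha\notin M_n$ forces some width-$4$ restriction to be non-monotone is easily justified (an ascent pair and a descent pair sit inside a common $4$-element set), and a spot check of representative patterns such as $(a,b,a,c)$ confirms that the sub-restriction inequalities do clash -- but it is substantially longer than necessary, and the case count is worse than in Proposition \ref{w4} precisely because, as you note, only the order-two symmetry is available rather than the dihedral one. The trade-off is that your route is self-contained and does not lean on part (1), whereas the paper's route buys brevity by exploiting the inclusion of monotone mappings into oriented mappings; if you keep your approach, the burden is on you to actually exhaust the roughly doubled list of representative patterns together with the five positions of $m$, since the contradiction is asserted rather than verified in your sketch.
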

\begin{proof}
1) is proved in Proposition \ref{w4}. 2) follows from 1) if we notice that monotone mappings are oriented mappings. 3) was deduced as a part of the proof of Proposition \ref{t=3}. 4) is a simple observation featuring in the proof of Proposition \ref{t=2}.
\end{proof}

\section{How the monoids embed into one another}

It is natural to ask if some of the monoids $\Sigma_n^2,\ldots,\Sigma_n^n$ are equal to one another, like some of the groups $\Gamma_n^2,\ldots,\Gamma_n^n$ are equal to one other, as described in Theorem \ref{2ton-2}. In this section we will show that the monoids $\Sigma_n^2,\ldots,\Sigma_n^n$ are not equal to one another, but some of them are submonoids of others, in a periodic fashion somewhat similar to Theorem \ref{2ton-2}. 

\begin{lemma}\label{winclusion}
If $p,q\in\{2,\ldots,n\}$ are such that $\Sigma_n^p\subseteq\Sigma_n^q$ then $p\leq q$. 
\end{lemma}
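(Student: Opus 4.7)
The plan is to prove the contrapositive: if $p > q$ (with $p,q\in\{2,\ldots,n\}$), then $\Sigma_n^p\not\subseteq\Sigma_n^q$. The key observation I will use is Proposition \ref{prop:monoids-easy-observations} part 4): any mapping of rank at most $p-1$ admits no injective restriction of width $p$ whatsoever, so it lies vacuously in $\Sigma_n^p$. Hence it will suffice to exhibit a single mapping of rank at most $p-1$ that fails to belong to $\Sigma_n^q$, which I will do by pointing to one odd injective restriction of width $q$.

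Since the hypothesis $p>q\geq 2$ gives $2\leq q\leq p-1\leq n-1$, I plan to define $\alpha$ on $[n]$ by $\alpha(0)=1$, $\alpha(1)=0$, $\alpha(i)=i$ for $2\leq i\leq q-1$, and $\alpha(i)=0$ for $i\geq q$. Then $\alpha$ has image $\{0,1,\ldots,q-1\}$, so its rank is exactly $q$, which is strictly less than $p$; by the observation above, $\alpha\in\Sigma_n^p$ is immediate. Meanwhile, the restriction $\alpha\upharpoonright\{0,1,\ldots,q-1\}$ is an injective mapping of width $q$ (it is the transposition swapping $0$ and $1$, fixing the rest) with exactly one inversion, namely the pair $0<1$ with $\alpha(0)=1>0=\alpha(1)$. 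So that restriction is odd and therefore $\alpha\notin\Sigma_n^q$.

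There is no real obstacle to overcome: the whole argument is a single well-chosen example. The only thing worth double-checking is that the numerical constraints $q\geq 2$ and $q\leq p-1$ leave enough room both to form a transposition inside $\{0,1,\ldots,q-1\}$ (providing the odd injective restriction of width $q$) and to cap the rank of $\alpha$ strictly below $p$ (so that $\alpha$ lies in $\Sigma_n^p$ trivially). Both conditions are automatic from $p>q\geq 2$, which finishes the plan.
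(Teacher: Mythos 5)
Your proof is correct and follows essentially the same approach as the paper: both exhibit a mapping of rank at most $p-1$ (hence vacuously in $\Sigma_n^p$ by Proposition \ref{prop:monoids-easy-observations}, part 4) whose restriction to $\{0,1,\ldots,q-1\}$ is an injective width-$q$ transposition with a single inversion, hence odd, so the mapping is not in $\Sigma_n^q$. The only differences are cosmetic (you cap the rank at $q$ rather than $p-1$, and you argue the contrapositive rather than by contradiction).
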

\begin{proof}
Suppose $\Sigma_n^p\subseteq\Sigma_n^q$. Assume that $q<p$ (this assumption will lead to a contradiction). 
Consider $\alpha=\left(\begin{smallmatrix}0&1&2&\cdots&p-2&p-1&\cdots&n-1\\1&0&2&\cdots&p-2&p-2&\cdots&p-2\end{smallmatrix}\right)$. 
As to $\alpha \upharpoonright {\{0,1,\ldots,q\}}$, it is injective (recall that $q\leq p-1$) and odd (it has only one inversion). Hence, $\alpha\not\in\Sigma_n^q$. At the same time, $\alpha\in T_n(\mathbf{r} \leq p-1)\subseteq\Sigma_n^p\subseteq\Sigma_n^q$. This contradiction proves that $p\leq q$.  
\end{proof}

From Lemma \ref{winclusion} it follows also that 
$\Sigma_n^p\subsetneqq\Sigma_n^q$ implies $p < q$, and $\Sigma_n^p = \Sigma_n^q$ if and only if $p = q$, for $p,q\in\{2,\ldots,n\}$. Hence, unlike the groups $\Gamma_n^2,\ldots,\Gamma_n^{n-2}$, one cannot find two monoids in the list $\Sigma_n^2,\ldots,\Sigma_n^{n-2}$ which are equal to one another. 


It is easy to show that the next lemma follows from Theorem \ref{2ton-2}, Proposition \ref{grnew} (and the observation before it) and the following properties: 
\begin{itemize}
    \item $\sigma_n\in A_n$ if and only if $n$ is odd;
    \item $\rho_n\in A_n$ if and only if $n = 0  \;(\mathrm{mod} \; 4)$ or $n = 1  \;(\mathrm{mod} \; 4)$; 
    \item $\sigma_n\in \Gamma_n^{n-1}$ if and only if $n$ is even;
    \item $\rho_n\in \Gamma_n^{n-1}$ if and only if $n = 1  \;(\mathrm{mod} \; 4)$ or $n = 2  \;(\mathrm{mod} \; 4)$. 
\end{itemize}

\begin{lemma}\label{ginclusion}
Let $p,q\in\{2,\ldots,n\}$. We have $\Gamma_n^p\subseteq\Gamma_n^q$ if and only if 
$p = 2  \;(\mathrm{mod} \; 4)$ or $q = 1  \;(\mathrm{mod} \; 4)$ or $p = q  \;(\mathrm{mod} \; 4)$. 
\end{lemma}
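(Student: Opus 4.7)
The plan is to reduce the biconditional to a finite case analysis driven by $p, q \pmod 4$ and by whether $p$ or $q$ lies in the exceptional range $\{n-1, n\}$. The main tools will be Theorem~\ref{2ton-2} for the principal case, and Proposition~\ref{grnew} together with the four bullet-point membership criteria for the border cases.

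First I would handle the principal case, where both $p, q \in \{2, \ldots, n-2\}$. Here Theorem~\ref{2ton-2} identifies each of $\Gamma_n^p$ and $\Gamma_n^q$ with one of the four groups $\{\iota_n\}$, $Z_n$, $\{\rho_n, \iota_n\}$, $D_n$ according to the residue modulo~$4$. These four groups form a tiny inclusion lattice with $\{\iota_n\}$ at the bottom, $D_n$ at the top, and $Z_n$, $\{\rho_n, \iota_n\}$ as incomparable middle elements. The three disjuncts of the lemma correspond exactly to the three ways an inclusion can be forced in this lattice: the source is the minimum ($p \equiv 2 \pmod 4$), the target is the maximum ($q \equiv 1 \pmod 4$), or the two groups coincide ($p \equiv q \pmod 4$). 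The converse direction amounts to observing that $Z_n$ and $\{\rho_n, \iota_n\}$ are incomparable, so outside the three listed configurations no inclusion is possible.

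Next I would handle the border cases, where $p$ or $q$ equals $n-1$ or $n$. The structure of $\Gamma_n^{n-1}$ is controlled by Proposition~\ref{grnew} when $n$ is odd, and by the observation preceding it when $n$ is even (which rules out $\Gamma_n^{n-1} \subseteq A_n$), while $\Gamma_n^n = A_n$ is standard. In each sub-case the candidate inclusion reduces to checking whether the generators $\sigma_n$ and $\rho_n$ of the smaller group lie in the larger one, and the four bullet points listed just before the lemma tabulate exactly these memberships as functions of $n \pmod 4$. Matching the resulting pattern against the three disjuncts of the lemma then yields the biconditional.

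The hard part will be the bookkeeping: one must enumerate sub-cases stratified by the positions of $p, q$ relative to $\{n-1, n\}$ and by the residue of $n$ modulo $4$, and then confirm in each sub-case that the lemma's condition matches the actual inclusion behaviour. The simplification that keeps this tractable is that every group $\Gamma_n^t$ appearing in the comparison is, for the purposes of mutual inclusion, fully determined by which of $\sigma_n$ and $\rho_n$ it contains, so the whole argument collapses to a uniform tabulation indexed by $n \pmod 4$, exactly as the four bullets were set up to support.
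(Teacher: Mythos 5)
Your route is the same as the paper's: the paper offers no detailed proof, only the remark that the lemma follows from Theorem~\ref{2ton-2}, Proposition~\ref{grnew} (and the observation before it) and the four tabulated memberships of $\sigma_n$ and $\rho_n$; your principal case (both indices at most $n-2$, reduced to the four-element inclusion lattice with $\{\iota_n\}$ at the bottom, $D_n$ at the top and $Z_n$, $\{\rho_n,\iota_n\}$ incomparable in the middle) and your use of Proposition~\ref{grnew} for the pair $(n-1,n)$ are exactly that argument, and that part is correct.

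The gap is in the closing step. The principle you invoke to collapse the bookkeeping --- that every group in the comparison is, for purposes of mutual inclusion, determined by which of $\sigma_n,\rho_n$ it contains --- holds only for $\Gamma_n^t$ with $t\le n-2$, because only those groups are generated by subsets of $\{\sigma_n,\rho_n\}$. It breaks down when $\Gamma_n^{n-1}$ or $\Gamma_n^n=A_n$ sits on the \emph{left} of a candidate inclusion into some $\Gamma_n^q$ with $q\le n-2$: there is no pair of generators whose membership in $\Gamma_n^q$ you could check. Worse, in exactly these configurations no amount of bookkeeping can rescue the biconditional, because it is false as stated: take $n=6$, $p=6$, $q=2$; then $p\equiv 2\pmod 4$, so the criterion asserts $\Gamma_6^6=A_6\subseteq\Gamma_6^2=\{\iota_6\}$. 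Likewise $n=7$, $p=7$, $q=5$ has $q\equiv 1\pmod 4$ and would give $A_7\subseteq D_7$. So a proof of the literal statement cannot succeed. Note that the lemma is only ever applied later (in Lemma~\ref{converse}) with $p\le q$, and under that extra hypothesis the bad configurations vanish ($p\in\{n-1,n\}$ forces $q\in\{n-1,n\}$, and those pairs are handled by Proposition~\ref{grnew} and the observation before it); the ``only if'' direction for $p\in\{n-1,n\}$, $q\le n-2$ still needs its own short argument (e.g.\ $\Gamma_n^q\subseteq D_n$ while $\Gamma_n^{n-1}$ and $A_n$ cannot embed in $D_n$ for the relevant $n$), but it is never contradicted. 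Your write-up should therefore either add the hypothesis $p\le q$ to the ``if'' direction, or explicitly separate out and correct the cases with $p\in\{n-1,n\}$ and $q\le n-2$; as it stands, the ``uniform tabulation'' you describe would silently certify false inclusions.
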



\begin{lemma} \label{inclusion}
If $p,q\in\{2,\ldots,n\}$ are such that $\Sigma_n^p\subseteq\Sigma_n^q$ then $p\leq q$ and 
either $p = 2  \;(\mathrm{mod} \; 4)$ or $q = 1  \;(\mathrm{mod} \; 4)$ or $p = q  \;(\mathrm{mod} \; 4)$. 
\end{lemma}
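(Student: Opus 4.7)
The plan is to deduce this as a direct corollary of the two preceding lemmas, using the group-of-units observation from Proposition \ref{prop:monoids-easy-observations}. There is essentially no new combinatorics to do; the work has already been done in \ref{winclusion} and \ref{ginclusion}.

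First I would dispose of the inequality $p \leq q$: this is exactly the content of Lemma \ref{winclusion}, which already handles the general case $p,q \in \{2,\ldots,n\}$ by exhibiting a concrete witness of rank $p-1$ that lives in $\Sigma_n^p$ but not in $\Sigma_n^q$ when $q < p$. So nothing more is needed for this half.

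Next, to derive the modular condition, I would pass from monoids to their groups of units. By Proposition \ref{prop:monoids-easy-observations}, part 3), we have $\Gamma_n^t = \Sigma_n^t \cap S_n$ for every $t \in \{1,2,\ldots,n\}$. Therefore, intersecting both sides of the hypothesised inclusion $\Sigma_n^p \subseteq \Sigma_n^q$ with $S_n$ yields
\[
\Gamma_n^p \;=\; \Sigma_n^p \cap S_n \;\subseteq\; \Sigma_n^q \cap S_n \;=\; \Gamma_n^q.
\]
Now Lemma \ref{ginclusion}, which classifies exactly when one $\Gamma_n^p$ sits inside another, gives immediately that $p \equiv 2 \pmod 4$, or $q \equiv 1 \pmod 4$, or $p \equiv q \pmod 4$. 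Combined with $p \leq q$ from the first paragraph, this is the claim.

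There is essentially no obstacle here; the only thing to double-check is that the range of indices in Lemma \ref{ginclusion} (namely $p,q \in \{2,\ldots,n\}$) matches what we need, so that we need not separately worry about the boundary values $q = n-1$ and $q = n$ where the structure of $\Gamma_n^q$ is exceptional. Since \ref{ginclusion} is stated for this full range, the reduction is clean and the proof is one short paragraph.
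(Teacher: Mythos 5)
Your proof is correct and matches the paper's own argument exactly: the paper likewise obtains $p\leq q$ from Lemma \ref{winclusion} and the modular condition by noting that $\Sigma_n^p\subseteq\Sigma_n^q$ implies $\Gamma_n^p\subseteq\Gamma_n^q$ and invoking Lemma \ref{ginclusion}. Your explicit justification of that implication via Proposition \ref{prop:monoids-easy-observations}, part 3), is a welcome extra detail but not a different route.
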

\begin{proof}
Since $\Sigma_n^p\subseteq\Sigma_n^q$ implies $\Gamma_n^p\subseteq\Gamma_n^q$, the result is an immediate consequence of Lemmas \ref{winclusion} and \ref{ginclusion}.
\end{proof}


Our next task is to prove the converse of Lemma \ref{inclusion}; we do it in Lemma \ref{converse}. 






\begin{lemma} \label{converse}
1) Let $2\leq p\leq q\leq k\leq n$. 
If $p = 2  \;(\mathrm{mod} \; 4)$ or $q = 1  \;(\mathrm{mod} \; 4)$ or $p = q  \;(\mathrm{mod} \; 4)$, then 
$\Sigma_n^p(\mathbf{r} = k) \subseteq \Sigma_n^q(\mathbf{r} = k)$. 
\\2) For $t\leq k-2\leq n-2$, 
\begin{itemize}
    \item if $t = 2  \;(\mathrm{mod} \; 4)$ then $\Sigma_n^t(\mathbf{r} = k)\subseteq O_n$; 
    \item if $t = 3  \;(\mathrm{mod} \; 4)$ then $\Sigma_n^t(\mathbf{r} = k)\subseteq OP_n$; 
    \item if $t = 0  \;(\mathrm{mod} \; 4)$ then $\Sigma_n^t(\mathbf{r} = k)\subseteq M_n$; 
    \item if $t = 1  \;(\mathrm{mod} \; 4)$ then $\Sigma_n^t(\mathbf{r} = k)\subseteq OR_n$.
\end{itemize}
3) For $t\leq k-2\leq n-2$, 
\begin{itemize}
    \item if $t = 2  \;(\mathrm{mod} \; 4)$ then $\Sigma_n^t(\mathbf{r} \ge k)  = O_n(\mathbf{r} \ge k)$; 
    \item if $t = 3  \;(\mathrm{mod} \; 4)$ then $\Sigma_n^t(\mathbf{r} \ge k) = OP_n(\mathbf{r} \ge k)$; 
    \item if $t = 0  \;(\mathrm{mod} \; 4)$ then $\Sigma_n^t(\mathbf{r} \ge k) = M_n(\mathbf{r} \ge k)$; 
    \item if $t = 1  \;(\mathrm{mod} \; 4)$ then $\Sigma_n^t(\mathbf{r} \ge k) = OR_n(\mathbf{r} \ge k)$.
\end{itemize}
\end{lemma}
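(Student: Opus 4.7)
The plan is to prove all three parts using a common \emph{transversal correspondence}. Given a mapping $\alpha$ on $[n]$ of rank $k$, call $U\subseteq[n]$ a transversal of $\ker\alpha$ if $|U|=k$ and $\alpha\upharpoonright U$ is injective (equivalently, $\alpha(U)=\im\alpha$). Writing $U=\{u_1<\cdots<u_k\}$ and $\im\alpha=\{v_1<\cdots<v_k\}$, the bijection $\alpha\upharpoonright U$ corresponds to a permutation $\pi_U\in S_k$ via $\alpha(u_i)=v_{\pi_U(i)}$. A direct check gives $\mathrm{inv}(\pi_U\upharpoonright T)=\mathrm{inv}(\alpha\upharpoonright\{u_i:i\in T\})$ for any $T\subseteq\{1,\dots,k\}$, so that $\alpha\in\Sigma_n^t$ forces $\pi_U\in\Gamma_k^t$ for every transversal $U$. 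Moreover, every injective restriction of $\alpha$ (of any width $\le k$) embeds into some transversal, by choosing one preimage for each of the remaining kernel classes.

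For Part 1, let $\alpha\in\Sigma_n^p(\mathbf{r}=k)$. For every transversal $U$ we have $\pi_U\in\Gamma_k^p$; Lemma \ref{ginclusion} applied with $k$ in place of $n$ turns the mod-$4$ hypothesis into the inclusion $\Gamma_k^p\subseteq\Gamma_k^q$, so that $\pi_U\in\Gamma_k^q$ as well. Since every injective width-$q$ restriction of $\alpha$ lies inside some transversal, this gives $\alpha\in\Sigma_n^q(\mathbf{r}=k)$.

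For Part 2, fix $t\le k-2$ and $\alpha\in\Sigma_n^t(\mathbf{r}=k)$. By the correspondence, $\pi_U\in\Gamma_k^t$ for every transversal $U$; because $t\le k-2$, Theorem \ref{2ton-2} (with $k$ in place of $n$) pins $\pi_U$ down to be the identity, a cyclic shift, the identity or the order-reversal, or a dihedral permutation, according to $t \bmod 4$. Translating back, $\alpha\upharpoonright U$ is order-preserving, orientation-preserving, monotone, or oriented, respectively. These properties descend to every subrestriction, and since every injective restriction of $\alpha$ of width $w\in\{2,3,4\}$ sits inside some transversal, it satisfies the corresponding local condition of the classical monoid $X_n$; Theorem \ref{transversals} then yields $\alpha\in X_n$.

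Part 3 is then immediate: the inclusion $\Sigma_n^t(\mathbf{r}\ge k)\subseteq X_n(\mathbf{r}\ge k)$ follows by applying Part 2 rank by rank (the hypothesis $t\le k-2$ lifts to $t\le k'-2$ for any $k'\ge k$), and the reverse inclusion is Proposition \ref{prop:embedding-of-classical-monoids}. I do not anticipate a serious obstacle; the only step warranting care is the verification that every injective restriction extends to a full transversal, which is a routine consequence of the rank-$k$ kernel structure of $\alpha$.
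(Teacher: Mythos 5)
Your proposal is correct and follows essentially the same route as the paper: the permutation $\pi_U$ you attach to a transversal $U$ is exactly the paper's $\beta=\alpha_L\alpha\alpha_R$ (conjugation by the order-preserving bijections $[k]\to U$ and $\im\alpha\to[k]$), after which both arguments invoke Lemma \ref{ginclusion} for part 1, Theorem \ref{2ton-2} plus Theorem \ref{transversals} for part 2, and Proposition \ref{prop:embedding-of-classical-monoids} for part 3. Your explicit remark that every injective restriction extends to a transversal is a small but welcome clarification of a step the paper leaves implicit.
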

\begin{proof}
Consider $\alpha\in \Sigma_n^t(\mathbf{r} = k)$. Let $X$ be any transversal of $\ker\alpha$. 
Let $\alpha_L$ and $\alpha_R$ be the injective order-preserving partial mappings such that 
$\dom\alpha_L=[k]$, $\im\alpha_L=X$, $\dom\alpha_R=\im\alpha$, $\im\alpha_R=[k]$. 
Consider $\beta=\alpha_L\alpha\alpha_R$ (that is, $\beta$ is the composition $\alpha_R(\alpha(\alpha_L(.)))$. 
Note that $\beta$ is a permutation on $[k]$. 
Let $Y$ be a $t$-element subset of $[k]$. Then we have 
$$
\beta \upharpoonright Y=(\alpha_L \upharpoonright Y)(\alpha \upharpoonright {\alpha_L(Y)})(\alpha_R \upharpoonright {\alpha(\alpha_L(Y))}).
$$
Since $\alpha_L(Y)$ is a $t$-element subset of $[n]$ and $\alpha\in\Sigma_n^t$, 
we conclude that $\alpha \upharpoonright {\alpha_L(Y)}$ is even. 
At the same time, both $\alpha_L \upharpoonright Y$ and $\alpha_R \upharpoonright {\alpha(\alpha_L(Y)}$ are order-preserving injective mappings, hence, have no inversions. 
Therefore, by Lemma \ref{lem:product-is-even}, $\beta \upharpoonright Y$ is even. Applying this argument to every $Y$, that is, every $t$-element subset of $[k]$, we conclude that $\beta\in\Gamma_k^t$. 

Therefore, by Theorem \ref{2ton-2}, we have the following cases.
\begin{itemize}
    \item if $t = 2  \;(\mathrm{mod} \; 4)$ then $\beta=\iota_k$; 
    \item if $t = 3  \;(\mathrm{mod} \; 4)$ then $\beta\in Z_k$; 
    \item if $t = 0  \;(\mathrm{mod} \; 4)$ then $\beta\in\{\rho_k,\iota_k\}$;
    \item if $t = 1  \;(\mathrm{mod} \; 4)$ then $\beta\in D_k$. 
\end{itemize}


Now notice that $\alpha \upharpoonright X=\alpha_L^{-1}\beta\alpha_R^{-1}$. Since $\alpha_L^{-1}$ and $\alpha_R^{-1}$ are order-preserving injective mappings, for $t\leq k-2$ we make the following conclusions. 
\begin{itemize}
    \item if $t = 2  \;(\mathrm{mod} \; 4)$ then $\alpha \upharpoonright X$ is order-preserving; 
    \item if $t = 3  \;(\mathrm{mod} \; 4)$ then $\alpha \upharpoonright X$ is orientation-preserving; 
    \item if $t = 0  \;(\mathrm{mod} \; 4)$ then $\alpha \upharpoonright X$ is monotone; 
    \item if $t = 1  \;(\mathrm{mod} \; 4)$ then $\alpha \upharpoonright X$ is oriented. 
\end{itemize}
Combining these observations with Theorem \ref{transversals}, we obtain part 2) of the lemma. Part 3) follows from part 2) and Proposition \ref{prop:embedding-of-classical-monoids}.


For part 1) of the lemma, consider $p\leq q$ and suppose $\alpha\in \Sigma_n^p(\mathbf{r} = k)$. Consider the same product $\beta=\alpha_L\alpha\alpha_R$ as above, and apply the same argument, with $t = p$. We conclude that $\beta\in\Gamma_k^p$. Hence, and since conditions $p = 2  \;(\mathrm{mod} \; 4)$ or $q = 1  \;(\mathrm{mod} \; 4)$ or $p = q  \;(\mathrm{mod} \; 4)$ are satisfied, by Lemma \ref{ginclusion}, $\beta\in\Gamma_k^q$. As above, notice that $\alpha \upharpoonright X=\alpha_L^{-1}\beta\alpha_R^{-1}$ and recall that $\alpha_L^{-1}$ and $\alpha_R^{-1}$ are order-preserving injective mappings; hence, every injective restriction of $\alpha$ of width $q$ is even. Therefore, $\alpha\in \Sigma_n^q(\mathbf{r} = k)$.
\end{proof}

Combining the results of this section, we obtain the following theorem, illustrated by the following figure. 

\begin{theorem}\label{relations}
Let $p,q\in\{2,\ldots,n\}$. Then, $\Sigma_n^p\subseteq\Sigma_n^q$ if and only if $p\leq q$ and 
either $p = 2  \;(\mathrm{mod} \; 4)$ or $q = 1  \;(\mathrm{mod} \; 4)$ or $p = q  \;(\mathrm{mod} \; 4)$. 
\end{theorem}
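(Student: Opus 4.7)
The plan is to assemble Theorem \ref{relations} from the two halves already established. The forward direction, assuming $\Sigma_n^p\subseteq\Sigma_n^q$ and deducing $p\leq q$ together with the mod-$4$ condition, is exactly the content of Lemma \ref{inclusion}; nothing more needs to be done there.

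For the converse, assume $p\leq q$ and that one of the three mod-$4$ conditions holds. I will show $\Sigma_n^p\subseteq\Sigma_n^q$ by partitioning $\Sigma_n^p$ according to rank, namely $\Sigma_n^p=\bigcup_{k=1}^{n}\Sigma_n^p(\mathbf{r}=k)$, and checking each rank separately. There are two regimes:

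\emph{Low ranks} $k\leq q-1$: here Proposition \ref{prop:monoids-easy-observations}(4) gives $\Sigma_n^q(\mathbf{r}\leq q-1)=T_n(\mathbf{r}\leq q-1)$, so trivially $\Sigma_n^p(\mathbf{r}=k)\subseteq T_n(\mathbf{r}=k)=\Sigma_n^q(\mathbf{r}=k)$. No use is made of the hypotheses on $p,q$ in this range.

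\emph{High ranks} $k\geq q$: here the triple $(p,q,k)$ satisfies $2\leq p\leq q\leq k\leq n$, and the mod-$4$ hypothesis is by assumption in force, so Lemma \ref{converse}(1) applies verbatim and yields $\Sigma_n^p(\mathbf{r}=k)\subseteq\Sigma_n^q(\mathbf{r}=k)$.

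Taking the union over all $k\in\{1,\ldots,n\}$ gives $\Sigma_n^p\subseteq\Sigma_n^q$, completing the converse and hence the theorem. There is no real obstacle: the substantive work has been done in Lemmas \ref{inclusion} and \ref{converse}, and the only thing left is the rank split, which is handled cleanly because the two ranges $k\leq q-1$ and $k\geq q$ are precisely where the two previous lemmas apply.
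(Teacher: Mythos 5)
Your proof is correct and follows essentially the same route as the paper, which simply states that the theorem is obtained ``by combining the results of this section'' (i.e.\ Lemma \ref{inclusion} for the forward direction and Lemma \ref{converse} together with Proposition \ref{prop:monoids-easy-observations}(4) for the converse). Your explicit rank split at $k=q$ is exactly the assembly the authors leave implicit, and it is carried out correctly.
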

\begin{center}
\begin{tikzpicture}[scale=1.0]
\coordinate (C) at (0.75,0.5); 
\coordinate (A) at (2,0);
\coordinate (B) at (3,0.5);
\coordinate (D) at (1.75,1); 

\coordinate (G) at (0.75,2.0);
\coordinate (E) at (2,1.5); 
\coordinate (F) at (3,2); 
\coordinate (H) at (1.75,2.5); 

\coordinate (G1) at (0.75,3.0); 
\coordinate (E1) at (2,2.5); 
\coordinate (F1) at (3,3); 
\coordinate (H1) at (1.75,3.5); 

\draw (A) -- (B) -- (D) -- (C) -- cycle;
\draw (E) -- (F) -- (H) -- (G) -- cycle;

\draw (A) -- (E);
\draw (B) -- (F);
\draw (C) -- (G);
\draw (D) -- (H);

\draw[dashed] (E) -- (E1);
\draw[dashed] (F) -- (F1);
\draw[dashed] (G) -- (G1);
\draw[dashed] (H) -- (H1);

\node[below] at (A) {$\Sigma_n^2$}; 
\node[right] at (B) {$\Sigma_n^3$}; 
\node[left] at (C) {$\Sigma_n^4$}; 
\node[above left] at (D) {$\Sigma_n^5$}; 

\coordinate (E2) at (2.3,1.35); 
\node at (E2) {$\Sigma_n^6$};
\node[right] at (F) {$\Sigma_n^7$}; 
\node[left] at (G) {$\Sigma_n^8$}; 
\node[above left] at (H) {$\Sigma_n^9$}; 

\end{tikzpicture}
\end{center}


In this paper we consider monoids of full mappings on $[n]$ that extend the groups $\Gamma_n^1,\ldots,\Gamma_n^n$. 
Similar research can be conducted by considering monoids of partial mappings on $[n]$ or inverse monoids of partial permutations on $[n]$. 

\section{Another generalized definition for monoids}

As we reflect on Proposition \ref{prop:generators-of-monoids} and Theorem \ref{relations}, this motivates us to consider the following, alternative family of monoids. For $t=1,2,\ldots,n$, we define $\Delta_n^t$ as the monoid generated by all mappings on $[n]$ whose rank is at least $n-1$ and whose injective restrictions of width $t$ is even; in other words, $\Delta_n^t$ is generated by $\Sigma_n^t(\mathbf{r} \ge n-1)$. 

The groups of units of the new monoids $\Delta_n^t$ are the groups $\Gamma_n^t$, for $t=1,\ldots,n$, in the same way as it was for the family of monoids $\Sigma_n^t$ considered above.
We have $\Delta_n^1=\Sigma_n^1=T_n$ (since $T_n$ is generated by its mappings of rank greater than or equal to $n-1$)  and $\Delta_n^n=\Sigma_n^n=T_n(\mathbf{r} \le n-1)\cup A_n$. 
More interesting is the fact that the monoids $\Delta_n^2,\ldots,\Delta_n^{n-3}$ demonstrate a periodic behavior similar to 
the one described by Theorem \ref{2ton-2} for the groups $\Gamma_n^2,\ldots,\Gamma_n^{n-2}$, as described in the following theorem. 

\begin{theorem}\label{2ton-2mon}
For every $t = 2, \dots, n-3$, 
\begin{itemize}
    \item if $t = 2  \;(\mathrm{mod} \; 4)$ then $\Delta_n^t=O_n$;  
    \item if $t = 3  \;(\mathrm{mod} \; 4)$ then $\Delta_n^t=OP_n$;
    \item if $t = 0  \;(\mathrm{mod} \; 4)$ then $\Delta_n^t=M_n$;
    \item if $t = 1  \;(\mathrm{mod} \; 4)$ then $\Delta_n^t=OR_n$.
\end{itemize}
\end{theorem}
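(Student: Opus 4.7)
My plan is to observe that the statement is essentially an immediate consequence of Lemma \ref{converse} part 3) together with Proposition \ref{prop:generators-of-monoids}, once we choose the rank parameter correctly. The strategy is to compute the generating set of $\Delta_n^t$ explicitly and show it agrees with the known rank-$(n-1)$-and-above part of the relevant classical monoid $X_n$, where $X_n \in \{O_n, OP_n, M_n, OR_n\}$ is selected according to the residue of $t$ modulo $4$ as prescribed by the theorem.

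First, I would apply Lemma \ref{converse} part 3) with $k = n-1$. The hypothesis $t \le k-2 \le n-2$ becomes $t \le n-3$, which is exactly the range stipulated by Theorem \ref{2ton-2mon}. Applying the lemma to each of the four cases gives
\[
\Sigma_n^t(\mathbf{r} \ge n-1) = X_n(\mathbf{r} \ge n-1),
\]
where $X_n$ is $O_n$, $OP_n$, $M_n$, $OR_n$ according as $t \equiv 2, 3, 0, 1 \pmod{4}$ respectively. (Note that the two ranks being captured on the right-hand side, namely $n-1$ and $n$, agree with the rank-$n-1$ elements of $X_n$ on one hand, and with the groups of units $\Gamma_n^t$ as described in Theorem \ref{2ton-2} on the other hand.)

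Second, I would invoke Proposition \ref{prop:generators-of-monoids}, which asserts that each of $O_n, OP_n, M_n, OR_n$ is generated by its elements of rank at least $n-1$, i.e.\ $X_n = \langle X_n(\mathbf{r} \ge n-1) \rangle$. Combining this with the equality in the previous paragraph and the definition $\Delta_n^t = \langle \Sigma_n^t(\mathbf{r} \ge n-1) \rangle$, we obtain
\[
\Delta_n^t = \langle \Sigma_n^t(\mathbf{r} \ge n-1) \rangle = \langle X_n(\mathbf{r} \ge n-1) \rangle = X_n,
\]
which is the desired equality in the appropriate case.

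I do not anticipate a genuine obstacle: all the hard work has already been done in Lemma \ref{converse} (and the results it depends on). The only thing to verify carefully is the interplay of the range hypotheses: the theorem insists $t \le n-3$ precisely so that Lemma \ref{converse} can be invoked at $k=n-1$, which is the smallest value of $k$ that simultaneously allows us to use Proposition \ref{prop:generators-of-monoids}'s description of the classical monoids. For $t = n-2$ one would need $k \ge n$, so the rank-$(n-1)$ generators are no longer available from Lemma \ref{converse}; this explains cleanly why the theorem stops at $t = n-3$ rather than extending to $t = n-2$.
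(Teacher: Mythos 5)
Your proposal is correct and follows essentially the same route as the paper, which derives the theorem from Proposition \ref{prop:generators-of-monoids} together with Theorem \ref{relations} (itself a repackaging of Lemma \ref{converse}). If anything, your direct appeal to Lemma \ref{converse} part 3) with $k=n-1$ is the more precise citation, and your closing remark correctly identifies why the range stops at $t=n-3$.
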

\begin{proof}
This result follows from Proposition \ref{prop:generators-of-monoids} and Theorem \ref{relations}. 
\end{proof}

One can ask if the above theorem generalizes to the case $t = n-2$. For $n=4,5$ we have $\Delta_4^2=O_4$ and $\Delta_5^3=OP_5$, in accordance with the above theorem. However, this is not true for $n\geq6$; for example, we have 
$\alpha=\left(\begin{smallmatrix}0&1&2&3&4&\cdots&n-2&n-1\\2&3&0&1&4&\cdots&n-2&n-2\end{smallmatrix}\right)\in\Delta_n^{n-2}$ 
but $\alpha\not\in OR_n$. 


\bibliographystyle{IEEEtran}
\bibliography{main}

\begin{thebibliography}{10}
\providecommand{\url}[1]{#1}
\csname url@samestyle\endcsname
\providecommand{\newblock}{\relax}
\providecommand{\bibinfo}[2]{#2}
\providecommand{\BIBentrySTDinterwordspacing}{\spaceskip=0pt\relax}
\providecommand{\BIBentryALTinterwordstretchfactor}{4}
\providecommand{\BIBentryALTinterwordspacing}{\spaceskip=\fontdimen2\font plus
\BIBentryALTinterwordstretchfactor\fontdimen3\font minus \fontdimen4\font\relax}
\providecommand{\BIBforeignlanguage}[2]{{%
\expandafter\ifx\csname l@#1\endcsname\relax
\typeout{** WARNING: IEEEtran.bst: No hyphenation pattern has been}%
\typeout{** loaded for the language `#1'. Using the pattern for}%
\typeout{** the default language instead.}%
\else
\language=\csname l@#1\endcsname
\fi
#2}}
\providecommand{\BIBdecl}{\relax}
\BIBdecl

\bibitem{aizenstat1962defining}
A.~Y. Aizenstat, ``The defining relations of the endomorphism semigroup of a finite linearly ordered set,'' in \emph{Sibirsk. Mat.}, vol.~3, 1962, pp. 161--169.

\bibitem{howie1971products}
J.~Howie, ``Products of idempotents in certain semigroups of transformations.'' \emph{Proceedings of the Edinburgh Mathematical Society}, vol.~17, no.~3, 1971.

\bibitem{gomes1992ranks}
G.~M. Gomes and J.~M. Howie, ``On the ranks of certain semigroups of order-preserving transformations,'' in \emph{Semigroup Forum}, vol.~45.\hskip 1em plus 0.5em minus 0.4em\relax Springer, 1992, pp. 272--282.

\bibitem{higgins1993combinatorial}
P.~M. Higgins, ``Combinatorial results for semigroups of order-preserving mappings,'' in \emph{Mathematical Proceedings of the Cambridge Philosophical Society}, vol. 113, no.~2.\hskip 1em plus 0.5em minus 0.4em\relax Cambridge University Press, 1993, pp. 281--296.

\bibitem{vernitskii1995proof}
A.~Vernitskii and M.~Volkov, ``The proof and the generalization of {H}iggins' theorem on divisors of semigroups of order-preserving mappings,'' \emph{Russian Mathematics-New York}, vol.~39, no.~1, pp. 34--39, 1995.

\bibitem{catarino1999monoid}
P.~M. Catarino and P.~M. Higgins, ``The monoid of orientation-preserving mappings on a chain,'' in \emph{Semigroup Forum}, vol.~2, no.~58, 1999, pp. 190--206.

\bibitem{fernandes2000monoid}
V.~H. Fernandes, ``The monoid of all injective orientation preserving partial transformations on a finite chain,'' \emph{Communications in Algebra}, vol.~28, no.~7, pp. 3401--3426, 2000.

\bibitem{higgins2022orientation}
P.~M. Higgins and A.~Vernitski, ``Orientation-preserving and orientation-reversing mappings: a new description,'' in \emph{Semigroup Forum}, vol. 104, no.~2.\hskip 1em plus 0.5em minus 0.4em\relax Springer, 2022, pp. 509--514.

\bibitem{fernandes2023monoid}
V.~H. Fernandes and T.~Paulista, ``On the monoid of partial isometries of a cycle graph,'' \emph{Turkish Journal of Mathematics}, vol.~47, no.~6, pp. 1746--1760, 2023.

\bibitem{fernandes2023CKMS}
V.~H. Fernandes, ``\BIBforeignlanguage{English}{Oriented transformations on a finite chain: another description},'' \emph{\BIBforeignlanguage{English}{Commun. Korean Math. Soc.}}, vol.~38, no.~3, pp. 725--731, 2023.

\bibitem{higgins2024correction}
P.~M. Higgins and A.~Vernitski, ``Correction to: Orientation preserving and orientation reversing mappings: a new description,'' in \emph{Semigroup Forum}.\hskip 1em plus 0.5em minus 0.4em\relax Springer, 2024, pp. 1--2.

\bibitem{fernandes2024CKMS}
V.~H. Fernandes, ``\BIBforeignlanguage{English}{Corrigendum on ``oriented transformations on a finite chain: another description" [commun. korean math. soc. 38 (2023), no. 3, pp. 725--731]},'' \emph{\BIBforeignlanguage{English}{Commun. Korean Math. Soc.}}, vol.~39, no.~3, pp. 643--645, 2024.

\bibitem{tanimoto2010parity}
S.~Tanimoto, ``Parity alternating permutations and signed eulerian numbers,'' \emph{Annals of Combinatorics}, vol.~14, no.~3, pp. 355--366, 2010.

\bibitem{tanimoto2010combinatorics}
------, ``Combinatorics of the group of parity alternating permutations,'' \emph{Advances in Applied Mathematics}, vol.~44, no.~3, pp. 225--230, 2010.

\bibitem{mcalister1998}
D.~McAlister, ``Semigroups generated by a group and an idempotent,'' \emph{Communications in Algebra}, vol.~26, no.~2, pp. 515--547, 2000.

\bibitem{fernandes2002}
V.~H. Fernandes, ``\BIBforeignlanguage{English}{Presentations for some monoids of partial transformations on a finite chain: a survey},'' in \emph{\BIBforeignlanguage{English}{Semigroups, algorithms, automata and languages. Proceedings of workshops held at the International Centre of Mathematics, CIM, Coimbra, Portugal, May, June and July 2001}}.\hskip 1em plus 0.5em minus 0.4em\relax Singapore: World Scientific, 2002, pp. 363--378.

\bibitem{fernandesetal2005}
V.~H. Fernandes, G.~M.~S. Gomes, and M.~M. Jesus, ``\BIBforeignlanguage{English}{Presentations for some monoids of partial transformations on a finite chain.}'' \emph{\BIBforeignlanguage{English}{Commun. Algebra}}, vol.~33, no.~2, pp. 587--604, 2005.

\bibitem{levimitchell2006}
I.~Levi and J.~D. Mitchell, ``\BIBforeignlanguage{English}{On rank properties of endomorphisms of finite circular orders.}'' \emph{\BIBforeignlanguage{English}{Commun. Algebra}}, vol.~34, no.~4, pp. 1237--1250, 2006.

\end{thebibliography}

\bigskip 

\noindent{\sc V\'\i tor H. Fernandes},
Center for Mathematics and Applications (NOVA Math)
and Department of Mathematics,
Faculdade de Ci\^encias e Tecnologia,
Universidade Nova de Lisboa,
Monte da Caparica,
2829-516 Caparica,
Portugal;
e-mail: vhf@fct.unl.pt.

\noindent{\sc Alexei Vernitski},
School of Mathematics, Statistics and Actuarial Science,
University of Essex,
Colchester, UK;
e-mail: asvern@essex.ac.uk.

\end{document}